\numberwithin{equation}{section}
\newtheorem{theorem}{Theorem}[section]
\newtheorem{lemma}[theorem]{Lemma}
\newtheorem{result}[theorem]{Result}
\newtheorem{proposition}[theorem]{Proposition}
\newtheorem{corollary}[theorem]{Corollary}
\theoremstyle{remark}
\newtheorem{remark}[theorem]{Remark}
\newtheorem{example}[theorem]{Example}
\newtheorem{definition}[theorem]{Definition}
\begin{document}

\title{Squeezing function corresponding to polydisk}

\author[N. Gupta]{Naveen Gupta}
\address{Department of Mathematics, University of Delhi,
Delhi--110 007, India}
\email{ssguptanaveen@gmail.com}

\author[S. Kumar]{Sanjay Kumar Pant}

\address{Department of Mathematics, Deen Dayal Upadhyaya college, University of Delhi,
Delhi--110 078, India}
\email{skpant@ddu.du.ac.in}

\begin{abstract}
In the present article,  we define squeezing function corresponding to polydisk
 and study its properties. We investigate relationship between squeezing fuction  and 
 squeezing function corresponding to polydisk. We also give an alternate proof for lower bound
 of the squeezing function of a product domain.
 \end{abstract}
\keywords{squeezing function; extremal map; holomorphic homogeneous regular domain.}
\subjclass[2010]{32F45, 32H02}
\maketitle

\section{Introduction}

The work in this article is motivated by a question posed by Forn\ae ss in his talk \cite{talk}.
Forn\ae ss posed the question {\em ``what is analogous theory of squeezing function when embeddings 
	are taken into polydisk instead of unit ball?"} This question seems to 
rely on the fact that
unit ball and unit polydisk in $\mathbb{C}^n$, $n>1$ are not biholomorphic\cite{rudin}. 
Therefore
unit polydisk are most suitable objects to see what happens to the squeezing function 
when we replace
unit ball by it. 
In this paper, we consider this problem. We define squeezing function in 
the case when embeddings are taken into unit polydisk\,---\,in the definition of squeezing
function\,---\,instead of unit ball and we explore its various properties.
It seems to us in the present work that the results connected with squeezing 
function corresponding to polydisk
do not depend in a big way on the nature of polydisk even it not being biholomorphic to 
unit ball. 
\smallskip

The notion of squeezing function started with the work of Liu et al. \cite{Yau2004}, 
\cite{Yau2005}, in  2004 and 2005, in which they studied holomorphic homogeneous 
regular(HHR) manifolds. In 2009, Yeung in his paper \cite{yeung}, studied this notion. 
He renamed the HHR property as uniform squeezing property. The formal definition
of squeezing function\,---\,motivated by the work of Liu et al. \cite{Yau2004},
\cite{Yau2005} and Yeung \cite{yeung}\,---\,was introduced by Deng et al. 
\cite{2012}. In their paper \cite{2012},
the authors presented several properties of squeezing functions. The squeezing 
function has been the focus of great interest in the last few years. Indeed, several
new directions in the study of the geometrical and analytical aspects of the 
squeezing function have recently been introduced:
\cite{uniform-squeezing}, \cite{kim-joo}, \cite{andreev}, \cite{zimmer2}, 
\cite{kaushalprachi}, \cite{recent}. 
\smallskip

Let us denote by $\mathbb{B}^n$ unit ball in $\mathbb{C}^n$ and let $\Omega\subseteq
\mathbb{C}^n$ be a bounded domain.
For $z\in{\Omega}$ and a holomorphic embedding $f:\Omega\to \mathbb{B}^n$
with $f(z)=0$,  let $$S_{\Omega}(f,z):=\sup \{r:\mathbb{B}^n(0,r)\subseteq f(\Omega)\},$$ 
where $\mathbb{B}^n(0,r)$ denotes
ball of radius $r$, centered at origin.
Squeezing function on $\Omega$, denoted by $S_{\Omega}$, is defined as
$$S_{\Omega}(z):=\sup_f\{S_{\Omega}(f,z)\},$$ 
where supremum is taken over holomorphic embeddings $f:\Omega\to \mathbb{B}^n$ 
with $f(z)=0.$
\smallskip

The fact that squeezing function is biholomorphic invariant follows from the definition. 
A bounded domain is called holomorphic homogeneous regular if its squeezing function
has a positive lower bound. 
\smallskip

The definition of squeezing function corresponding to polydisk can be written 
exactly in similar manner as the definition of squeezing function by replacing 
unit ball with unit polydisk.
\begin{definition}
	Let us denote by $\mathbb{D}^n$ unit polydisk in $\mathbb{C}^n$ and let 
	$\Omega\subseteq \mathbb{C}^n$ be a bounded domain.
	For $z\in{\Omega}$ and a holomorphic embedding $f:\Omega\to \mathbb{D}^n$
	with $f(z)=0$, let $$T_{\Omega}(f,z):=\sup 
	\{r:\mathbb{D}^n(0,r)\subseteq f(\Omega)\},$$ 
	where  $\mathbb{D}^n(0,r)$ denotes polydisk of radius $r$, centered at origin.
	Squeezing function corresponding to polydisk on $\Omega$, denoted by $T_{\Omega}$,
	is defined as
	$$T_{\Omega}(z):=\sup_f\{T_{\Omega}(f,z)\},$$ 
	where supremum is taken over holomorphic embeddings $f:\Omega\to \mathbb{D}^n$ 
	with $f(z)=0.$
\end{definition}
In what follows, by squeezing function
we mean squeezing function corresponding to polydisk.
Relation between the squeezing function(denoted by $T_{\Omega}$) 
and the squeezing function corresponding
to the unit ball(denoted by $S_{\Omega}$), for any bounded domain $\Omega$,
is given by the following lemma:

\begin{lemma}\label{relatinglemma}
	Let $\Omega\subset \mathbb{C}^n$ be a bounded domain.
	Then for every $z\in{\Omega}$,
	\begin{enumerate} 
		\item[a)] $T_{\Omega}(z)\geq \dfrac{1}{\sqrt{n}}S_{\Omega}(z);$ 
		\item[b)] $S_{\Omega}(z)\geq \dfrac{1}{\sqrt{n}}T_{\Omega}(z).$
	\end{enumerate}
\end{lemma}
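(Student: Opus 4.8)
The plan is to reduce everything to two elementary geometric inclusions between balls and polydisks, and then push an extremal map from one definition through to the other. The key observation is that for any radius $r>0$ one has the chain
\[
\mathbb{D}^n\!\left(0,\tfrac{r}{\sqrt{n}}\right)\subseteq B^n(0,r)\subseteq \mathbb{D}^n(0,r)\subseteq B^n(0,\sqrt{n}\,r).
\]
Indeed, if $\max_i|z_i|<r/\sqrt{n}$ then $\sum_i|z_i|^2<n\cdot(r^2/n)=r^2$, giving the first inclusion; if $\sum_i|z_i|^2<r^2$ then each $|z_i|<r$, giving the second; and the third follows symmetrically since $\max_i|z_i|<r$ forces $\sum_i|z_i|^2<nr^2$. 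I would state this as a preliminary remark, since both parts of the lemma are immediate consequences once it is in hand.

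For part (a), I would start from a near-extremal ball embedding. Fix $\varepsilon>0$ and choose a holomorphic embedding $f\colon\Omega\to B^n$ with $f(z)=0$ and $B^n(0,r)\subseteq f(\Omega)$ for some $r>S_\Omega(z)-\varepsilon$. Because $B^n\subseteq\mathbb{D}^n$, the very same map $f$ is a holomorphic embedding of $\Omega$ into $\mathbb{D}^n$ with $f(z)=0$, so it is admissible in the definition of $T_\Omega$. Applying the first inclusion above, $\mathbb{D}^n(0,r/\sqrt{n})\subseteq B^n(0,r)\subseteq f(\Omega)$, whence $T_\Omega(z)\geq r/\sqrt{n}>(S_\Omega(z)-\varepsilon)/\sqrt{n}$. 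Letting $\varepsilon\to0$ yields $T_\Omega(z)\geq \tfrac{1}{\sqrt{n}}S_\Omega(z)$; note that no rescaling of $f$ is needed here.

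For part (b) the only extra ingredient is a scaling, and this is the one place requiring a little care. Starting from a near-extremal polydisk embedding $g\colon\Omega\to\mathbb{D}^n$ with $g(z)=0$ and $\mathbb{D}^n(0,r)\subseteq g(\Omega)$ for $r>T_\Omega(z)-\varepsilon$, the map $g$ need not take values in $B^n$, so I would set $h:=\tfrac{1}{\sqrt{n}}g$. Since $\mathbb{D}^n(0,1/\sqrt{n})\subseteq B^n$ (by the first inclusion with $r=1$), $h$ is a holomorphic embedding of $\Omega$ into $B^n$ with $h(z)=0$, hence admissible for $S_\Omega$. Then $h(\Omega)=\tfrac{1}{\sqrt{n}}g(\Omega)\supseteq\mathbb{D}^n(0,r/\sqrt{n})\supseteq B^n(0,r/\sqrt{n})$ (using the second inclusion), so $S_\Omega(z)\geq r/\sqrt{n}$, and letting $\varepsilon\to0$ gives $S_\Omega(z)\geq\tfrac{1}{\sqrt{n}}T_\Omega(z)$. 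The main thing to watch is keeping the direction of each inclusion and the placement of the factor $\sqrt{n}$ consistent; once the chain of inclusions is fixed, both estimates are direct, and there is no genuine analytic obstacle beyond correct bookkeeping of the suprema.
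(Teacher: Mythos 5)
Your proof is correct and follows essentially the same route as the paper's: part a) reuses the ball embedding as a polydisk embedding together with the inclusion $\mathbb{D}^n\left(0,\tfrac{r}{\sqrt{n}}\right)\subseteq B^n(0,r)$, and part b) rescales the polydisk embedding by $\tfrac{1}{\sqrt{n}}$ so that it lands in $B^n$, exactly as in the paper. Your version is in fact slightly cleaner, since you make the supremum bookkeeping explicit via $\varepsilon$ and state the inclusion chain up front, whereas the paper leaves these steps implicit (and contains minor typos in part b)).
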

We say, a domain $\Omega$ is holomorphic homogeneous regular domain, 
if its squeezing function has a positive lower bound.
It follows from Lemma \ref{relatinglemma} that if 
one of them has a positive lower bound then the other has it too. 
Therefore holomorphic homogeneous regular domains are the same, 
in both the settings.

For a domain $\Omega\subseteq \mathbb{C}^n$, Fridman invariant(corresponding 
to unit polydisk), denoted by $h_{\Omega}^c$ is defined as
$$h_{\Omega}^c(a):=\sup\{\tanh r :B_{\Omega}^c(a,r)\subseteq f(\mathbb{D}^n), f:\mathbb{D}^n
\to \Omega \, \mbox{is holomorphic embedding}\},$$
where $c_{\Omega}$ is the Carathéodory pseudodistance and $B_{\Omega}^c(a,r)$ is
Carathéodory  ball centered at $a$ of radius $r>0.$ We allow $h^c_{\Omega}(a)=0$. 
Note that in \cite{frid1979}, \cite{frid1983} $\inf \frac 1 r$ is considered
in place of $\sup \tanh r.$

\section{Basic Properties}
We start with the simple observation that $T_{\Omega}$, for any bounded domain 
$\Omega$ is biholomorphic invariant.
To see this, let $f_0:\Omega_1\to \Omega_2$ be a biholomorphism and $z\in \Omega_1$. 
We claim that $T_{\Omega_1}(z)=T_{\Omega_2}(f_0(z))$.
\smallskip
\begin{itemize}
	
	\item	Let $f:\Omega_1\to \mathbb{D}^n$ be a holomorphic embedding with 
	$f(z)=0$. Take $h=f\circ g:\Omega_2\to \mathbb{D}^n,$ 
	where $g=f_0^{-1}:\Omega_2\to \Omega_1$. Then $h$ is holomorphic embedding
	with $h(f_0(z))=0$. Also note that $h(\Omega_2)=f(\Omega_1)$. 
	Let $r>0$ be such that $\mathbb{D}^n(0,r)\subseteq f(\Omega_1)=h(\Omega_2)$, 
	which gives us that $T_{\Omega_2}(f_0(z))\geq r$. Thus 
	\begin{equation*} 
		T_{\Omega_2}(f_0(z))\geq T_{\Omega_1}(z).
	\end{equation*}
	
	\item	On similar lines, we can prove that $T_{\Omega_1}(z)\geq T_{\Omega_2}(f_0(z))$,
	which will give us
	$$T_{\Omega_1}(z)= T_{\Omega_2}(f_0(z)).$$ 
\end{itemize}
\begin{proof}[Proof of Lemma \ref{relatinglemma}] Let $z\in {\Omega}$.
	\begin{enumerate} 
		\item[a)] Consider a holomorphic embedding $f:\Omega\to \mathbb{B}^n$ 
		such that $f(z)=0$. Let $r>0$ be
		such that $\mathbb{B}^n(0,r)\subseteq f(\Omega)\subseteq \mathbb{B}^n$. 
		Then by considering $f$ as a holomorphic
		embedding $f:\Omega\to \mathbb{D}^n$ and noticing that $\mathbb{D}^n
		\left(0,\dfrac{r}{\sqrt{n}}\right)
		\subseteq \mathbb{B}^n(0,r)$, 
		we get part a) of the lemma.
		\item[b)] Consider a holomorphic embedding $f:\Omega\to \mathbb{D}^n$ 
		such that $f(z)=0$. Let $r>0$ be such that
		$\mathbb{D}^n(0,r)\subseteq f(\Omega)\subseteq \mathbb{D}^n$. 
		Let $g:\Omega\to \mathbb{B}^n$ 
		be defined as $g(z)=\dfrac{1}{\sqrt{n}}f(z)$.
		then 
		$$\mathbb{B}^n\left(0,\dfrac{r}{\sqrt{n}}\right)\subseteq \mathbb{D}^n
		\left(0,\dfrac{r}{\sqrt{n}}\right)\subseteq g(\Omega)\subseteq \mathbb{B}^n,$$
		which proves part b) of the lemma. 
	\end{enumerate}
\end{proof}

Note that this lemma gives us how the two squeezing functions $T_{\Omega}$ and $S_{\Omega}$ 
are related and that the holomorphic homogeneous 
regular domains turn out to be the same in both the settings.
\begin{remark}\label{remark:relatinglemma}
	The two inequalities in Lemma \ref{relatinglemma} can not be attained simultaneously. One can see
	that it can not happen unless $T_{\Omega}(z)=0=S_{\Omega}(z)$.
\end{remark}
The fact that for any bounded domain $\Omega$, $T_{\Omega}$  is continuous and the existence of
extremal map for $T_\Omega$ follows on the same lines as in \cite{2012}, with some minor changes
in the arguments.
For the sake of completeness, we give the proof here. We first establish existence of extremal
property for $T_\Omega$. We will need the following theorem for this.

\begin{result}[{\cite[Theorem~2.2]{2012}}]\label{injectivity} 
	Let $\Omega\subseteq \mathbb{C}^n$ be a bounded domain and $z\in{\Omega}$. Let $\{f_i\}$
	be a sequence of injective holomorphic
	maps, $f_i:\Omega\to \mathbb{C}^n$, with $f_i(z)=0$ for all $i.$
	Suppose that $f_i\to f,$ uniformly on compact subsets of $\Omega$,
	where $f:\Omega\to \mathbb{C}^n$. If there exists a neighborhood
	$U$ of $0$ such that $U\subseteq f_i(\Omega)$ for all $i$, then $f$ is 
	injective.
\end{result}

The following theorem establishes existence of extremal maps for squeezing 
function.
\begin{theorem} \label{extremal}
	Let $\Omega\subseteq \mathbb{C}^n$ be a bounded domain. Let $z\in \Omega$, 
	then there exists a holomorphic embedding $f:\Omega\to \mathbb{D}^n$ with $f(z)=0$
	such that $\mathbb{D}^n(0,T_{\Omega}(z))\subseteq f(\Omega).$
\end{theorem}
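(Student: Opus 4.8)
The plan is to run the standard normal-families argument in the spirit of \cite{2012}, with the one twist that Result~\ref{injectivity} gets invoked \emph{twice}: once for the maximizing sequence itself and once for its sequence of local inverses. First I would set $r:=T_{\Omega}(z)$, which is positive because a suitable affine map embeds the bounded domain $\Omega$ into $\mathbb{D}^n$ with $z\mapsto 0$ and image containing a neighbourhood of the origin (if $r=0$ there is nothing to prove, since $0=f(z)\in f(\Omega)$ for any such embedding). By definition of the supremum, choose holomorphic embeddings $f_i:\Omega\to\mathbb{D}^n$ with $f_i(z)=0$ and radii $r_i\uparrow r$ for which $\mathbb{D}^n(0,r_i)\subseteq f_i(\Omega)$. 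Since each $f_i$ takes values in the bounded set $\mathbb{D}^n$, the family $\{f_i\}$ is locally uniformly bounded, so by Montel's theorem a subsequence (not relabelled) converges locally uniformly to a holomorphic map $f:\Omega\to\overline{\mathbb{D}^n}$, and clearly $f(z)=0$.

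Next I would upgrade $f$ to a holomorphic embedding into the \emph{open} polydisk. For injectivity, note that since $r_i\to r>0$ we have $\mathbb{D}^n(0,r/2)\subseteq f_i(\Omega)$ for all large $i$, which supplies the fixed neighbourhood of $0$ required by Result~\ref{injectivity}; hence $f$ is injective. To see that $f(\Omega)\subseteq\mathbb{D}^n$, write $f=(f_{(1)},\dots,f_{(n)})$ and apply the maximum modulus principle to each coordinate: if some $|f_{(j)}|$ attained the value $1$ at an interior point it would be constant, contradicting $f_{(j)}(z)=0$. Thus each $|f_{(j)}|<1$, so $f:\Omega\to\mathbb{D}^n$ is an injective holomorphic map and hence, by Osgood's theorem, a biholomorphism onto the open set $f(\Omega)$.

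The heart of the argument, and the step I expect to be the main obstacle, is the containment $\mathbb{D}^n(0,r)\subseteq f(\Omega)$. Fix $\rho\in(0,r)$. For all large $i$ we have $\overline{\mathbb{D}^n(0,\rho)}\subseteq\mathbb{D}^n(0,r_i)\subseteq f_i(\Omega)$, so the inverse branches $g_i:=f_i^{-1}$ are defined, holomorphic and injective on $\mathbb{D}^n(0,\rho)$, take values in the bounded set $\Omega$, and satisfy $g_i(0)=z$. By Montel again, after passing to a further subsequence $g_i\to g$ locally uniformly, with $g:\mathbb{D}^n(0,\rho)\to\overline{\Omega}$ holomorphic and $g(0)=z$. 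The delicate point is to prevent $g$ from sending points to $\partial\Omega$, and to control this I would apply Result~\ref{injectivity} a second time, now to $\{g_i\}$: since $\{f_i\}$ is a normal family it is uniformly Lipschitz near $z$, so $f_i\bigl(B(z,\delta)\bigr)\subseteq\mathbb{D}^n(0,\rho)$ for some fixed $\delta>0$ and all large $i$, which forces $B(z,\delta)\subseteq g_i\bigl(\mathbb{D}^n(0,\rho)\bigr)$. After translating by $z$ so that $g_i-z$ fixes $0$, this is exactly the hypothesis of Result~\ref{injectivity}, so the limit $g$ is injective, hence an open map. Consequently $g\bigl(\mathbb{D}^n(0,\rho)\bigr)$ is an open subset of $\overline{\Omega}$, so it cannot meet $\partial\Omega$, and therefore $g$ maps $\mathbb{D}^n(0,\rho)$ into $\Omega$.

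With $g$ landing inside $\Omega$ the composition becomes harmless: for each $w\in\mathbb{D}^n(0,\rho)$ the points $g_i(w)$ lie in $\Omega$ and converge to $g(w)\in\Omega$, while $f_i\to f$ locally uniformly, so passing to the limit in $f_i(g_i(w))=w$ yields $f(g(w))=w$. Hence every $w\in\mathbb{D}^n(0,\rho)$ lies in $f(\Omega)$, i.e.\ $\mathbb{D}^n(0,\rho)\subseteq f(\Omega)$; letting $\rho\uparrow r$ and using $\mathbb{D}^n(0,r)=\bigcup_{\rho<r}\mathbb{D}^n(0,\rho)$ gives $\mathbb{D}^n(0,T_{\Omega}(z))\subseteq f(\Omega)$, which completes the proof. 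The only technical item to verify carefully is the uniform-Lipschitz (equicontinuity) estimate for $\{f_i\}$ near $z$ that feeds the second application of Result~\ref{injectivity}; everything else is a routine passage to the limit.
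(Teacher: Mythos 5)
Your proposal follows the same architecture as the paper's proof: Montel on the maximizing sequence $\{f_i\}$, injectivity of the limit $f$ via Result~\ref{injectivity}, passage to the inverses $g_i=f_i^{-1}$ on a slightly smaller polydisk, a second normal-families limit $g$, the claim $g(\mathbb{D}^n(0,\rho))\subseteq\Omega$, and finally $f\circ g=\mathrm{Id}$ by taking limits in $f_i\circ g_i=\mathrm{Id}$. In places you are more careful than the paper (the maximum-modulus argument for $f(\Omega)\subseteq\mathbb{D}^n$, and the second application of Result~\ref{injectivity} to $\{g_i\}$ via the equicontinuity estimate $f_i\bigl(B(z,\delta)\bigr)\subseteq\mathbb{D}^n(0,\rho)$, which the paper does not do). However, there is a genuine gap at precisely the step you flagged as the heart of the argument: the assertion that ``$g(\mathbb{D}^n(0,\rho))$ is an open subset of $\overline{\Omega}$, so it cannot meet $\partial\Omega$'' is false for a general bounded domain. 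An open set contained in $\overline{\Omega}$ can meet $\partial\Omega$ whenever $\Omega$ is strictly smaller than the interior of its closure: take $\Omega=B^n\setminus\{0\}$ (a domain this very paper studies in Example~\ref{ex: ball}); then $\overline{\Omega}=\overline{B^n}$ and the open set $B^n\subseteq\overline{\Omega}$ contains the boundary point $0\in\partial\Omega$. So openness of $g$ alone does not exclude boundary values. (For what it is worth, the paper's own proof makes the same unjustified jump --- ``Since $g=f^{-1}$, we get that $g$ is open. Thus $g(\mathbb{D}^n(0,r_j))\subseteq\Omega$'' --- so you have reproduced its weakness along with its strengths.)

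The injectivity of $g$ that you established is exactly what repairs this, but it must be used through a Hurwitz/Rouch\'e-type argument (values of the limit are values of the approximants) rather than through the topology of $\overline{\Omega}$. Since $g$ is injective and holomorphic, $\det J_g$ vanishes nowhere. Fix $w^*\in\mathbb{D}^n(0,\rho)$ and a small ball with $\overline{B(w^*,\epsilon)}\subseteq\mathbb{D}^n(0,\rho)$; by injectivity, $\eta:=\mathrm{dist}\bigl(g(w^*),\,g(\partial B(w^*,\epsilon))\bigr)>0$. Once $\sup_{\overline{B(w^*,\epsilon)}}\|g_i-g\|<\eta$, the several-variables Rouch\'e theorem (invariance of topological degree under the homotopy $t\mapsto (1-t)\bigl(g-g(w^*)\bigr)+t\bigl(g_i-g(w^*)\bigr)$, which is zero-free on $\partial B(w^*,\epsilon)$) gives a point $w\in B(w^*,\epsilon)$ with $g_i(w)=g(w^*)$. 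Hence $g(w^*)\in g_i\bigl(\mathbb{D}^n(0,\rho)\bigr)\subseteq\Omega$, and since $w^*$ was arbitrary, $g(\mathbb{D}^n(0,\rho))\subseteq\Omega$. With that substituted for the false topological claim, your final limiting step $f(g(w))=w$ and the exhaustion $\rho\uparrow T_{\Omega}(z)$ go through verbatim, and the proof is complete.
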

\begin{proof}
	Let $z\in \Omega$ and  $r=T_{\Omega}(z)$. Then by definition, 
	there exists an increasing sequence of numbers $r_i$, converging to $r$, 
	a sequence of holomorphic embeddings $f_i:\Omega\to \mathbb{D}^n$ with 
	$f_i(z)=0$ such that $$\mathbb{D}^n(0,r_i)\subseteq f_i(\Omega)\subseteq \mathbb{D}^n.$$
	
	Thus $\{f_i\}$ is locally bounded and therefore it is normal by
	Montel's theorem. Let $\{f_{i_k}\}$ be a
	subsequence of $f_i$ such that $f_{i_k}\to f$; 
	where $f:\Omega\to \mathbb{C}^n$ is holomorphic.
	\smallskip
	
	Note that $f(z)=0$. Also, since $\mathbb{D}^n(0,r_2)\subseteq f_2(\Omega)$, 
	therefore $$\mathbb{D}^n(0,r_1)\subseteq \mathbb{D}^n(0,r_2)
	\subseteq f_2(\Omega).$$
	
	Similarly, it is easy to check that
	$\mathbb{D}^n(0,r_1)\subseteq f_i(\Omega)$ for all $i$, which gives us injectivity of $f$ 
	by using Result $\ref{injectivity}.$
	Note that, since $f$ is open map, we get $f:\Omega\to \mathbb{D}^n.$ 
	Finally we need to show that 
	$\mathbb{D}^n(0,T_{\Omega}(z))\subseteq f(\Omega).$ To prove this, it is sufficient to prove
	$\mathbb{D}^n(0,r_j)\subseteq f(\Omega)$
	for every fixed $j$. 
	\smallskip
	
	Since $r_i$ is an increasing sequence, we have 
	$\mathbb{D}^n(0,r_j)\subseteq f_i(\Omega)$ for all $i>j.$
	Let $g_i=f_i^{-1}|_{\mathbb{D}^n(0,r_j)}$,
	then $f_{i_k}\circ g_{i_k}=\mathbbm{Id}_{\mathbb{D}^n(0,r_j)} $ for $i_k>j$. 
	Without loss of generality, let us denote by $g_{i_k}$, a subsequence of $g_{i_k} $, 
	which exists by Montel's theorem, converging 
	to a function $g:\mathbb{D}^n(0,r_j)\to \mathbb{C}^n$, uniformly on compact subsets of 
	$\mathbb{D}^n(0,r_j)$. 
	\smallskip
	
	Clearly, $g:\mathbb{D}^n(0,r_j)\to \overline{\Omega}$. We claim that 
	$g:\mathbb{D}^n(0,r_j)\to \Omega$. 
	For this, first note that $g$ is defined on some neighborhood of the closure
	$\overline{\mathbb{D}^n(0,r_j)}$. Let $\zeta 
	\in{g(\mathbb{D}^n(0,r_j)})\setminus \Omega$. Let $\tilde {g_{i_k}}(z)=
	g_{i_k}(z)-\zeta$ and $\tilde {g}(z)=g(z)-\zeta$ for $z\in \mathbb{D}^n(0,r_j) .$
	Since $g_{i_k}\left(\mathbb{D}^n(0,r_j)\right)\subseteq  \Omega$, therefore $\tilde{g_{i_k}}$ has no
	zero in $\mathbb{D}^n(0,r_j)$ and $\tilde{g}$ has a zero in $\mathbb{D}^n(0,r_j)$.
	Let $z_0\in \mathbb{D}^n(0,r_j)$ be such that $g(z_0)=\zeta$, that is, $\tilde{g}(z_0)=0$.
	Since $g$ is locally biholomorphism, there is some $\delta>0$ such that $z_0$ is
	the unique zero of $\tilde{g}$ on $\overline{\mathbb{B}^n(z_0,\delta)}.$ Take $\epsilon 
	=\inf \{|\tilde{g}(z)|:\partial \mathbb{B}^n(z_0,\delta)\}$ and note that $\epsilon >0$.
	Now using convergence of $\tilde{g_{i_k}}$ for this $\epsilon$ and then using 
	\cite[~Theorem 3]{Rouche}, we get that $\tilde{g_{i_k}}$ has a zero in $\mathbb{B}^n(z_0,\delta)$
	for sufficiently large $k$. It is a contradiction therefore 
	$g\left( \mathbb{D}^n(0,r_j)\right)\subseteq \Omega$ for each $j$.
	
	\smallskip
	
	This gives us that $f\circ g:\mathbb{D}^n(0,r_j)\to \mathbb{D}^n(0,r_j)$ 
	is well defined and $f\circ g=\mathbbm{Id}_{\mathbb{D}^n(0,r_j)}$. Therefore we get that 
	$\mathbb{D}^n(0,r_j)\subseteq f(\Omega)$
	and hence the result is obtained. 
	
\end{proof}
\begin{remark}\label{rem:extremal}
	From Theorem \ref{extremal}, it is easy to see that if $T_{\Omega}(z)=1$ for some 
	$z\in{\Omega}$, then $\Omega$ is biholomorphic to unit polydisk.
\end{remark}

Let us denote by $K_{\Omega}(.,.)$, Kobayashi distance on 
any domain $\Omega\subseteq \mathbb{C}^n$ and 
let us define $\sigma:[0,1)\to R^+ $ as $\sigma(x)=\log \dfrac{1+x}{1-x}.$ 
It is easy to check that
$\sigma$ is one-one with inverse given by $\sigma^{-1}(y)=\tanh \left(\dfrac y 2\right)$. 
Now we are ready to prove that $T_\Omega$ for any bounded domain $\Omega$ is continuous.
\begin{theorem}
	For any bounded domain $\Omega\subseteq \mathbb{C}^n$, the function $T_{\Omega}$ 
	is continuous.
\end{theorem}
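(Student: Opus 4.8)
The plan is to show that $\sigma\circ T_{\Omega}$ is Lipschitz with respect to the Kobayashi distance $K_{\Omega}$, and then to invoke continuity of $K_{\Omega}$ together with that of $\sigma^{-1}$. Concretely, I would fix two points $z_1,z_2\in\Omega$ and first use Theorem \ref{extremal} to pick an extremal embedding $f:\Omega\to\mathbb{D}^n$ with $f(z_1)=0$ and $\mathbb{D}^n(0,T_{\Omega}(z_1))\subseteq f(\Omega)$. Writing $w=f(z_2)=(w_1,\dots,w_n)$, I would compose $f$ with the polydisk automorphism $\phi=(\phi_1,\dots,\phi_n)$ acting coordinatewise by the disk M\"obius maps $\phi_j(\zeta)=\frac{\zeta-w_j}{1-\overline{w_j}\,\zeta}$, so that $g:=\phi\circ f:\Omega\to\mathbb{D}^n$ is again a holomorphic embedding with $g(z_2)=0$. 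This is the polydisk analogue of the normalisation used for the ball in \cite{2012}; the point is that $\operatorname{Aut}(\mathbb{D}^n)$ splits, up to a coordinate permutation, into a product of disk automorphisms, so the whole estimate decouples into $n$ one-dimensional problems.

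The key step is to estimate the largest polydisk about $0$ inside $g(\Omega)$. Since $\mathbb{D}^n(0,T_{\Omega}(z_1))\subseteq f(\Omega)$ and $\phi$ respects the product structure, I would observe that $\phi\big(\mathbb{D}^n(0,T_{\Omega}(z_1))\big)=\prod_{j}\phi_j\big(\mathbb{D}(0,T_{\Omega}(z_1))\big)\subseteq g(\Omega)$, and that the inradius about $0$ of a product is the minimum of the coordinate inradii. For each $j$, a Schwarz--Pick computation (the circle $|\zeta|=T_{\Omega}(z_1)$ is a hyperbolic circle and $\phi_j$ is a hyperbolic isometry sending $w_j$ to $0$) would give $\min_{|\zeta|=T_{\Omega}(z_1)}|\phi_j(\zeta)|=\frac{T_{\Omega}(z_1)-|w_j|}{1-T_{\Omega}(z_1)|w_j|}$ whenever $|w_j|<T_{\Omega}(z_1)$. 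Hence $\mathbb{D}^n(0,\rho)\subseteq g(\Omega)$ with $\rho=\min_j\frac{T_{\Omega}(z_1)-|w_j|}{1-T_{\Omega}(z_1)|w_j|}$, so that $T_{\Omega}(z_2)\ge\rho$.

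The remaining step converts this into a clean inequality via $\sigma$. A direct calculation shows that the multiplicative M\"obius radius turns additive under $\sigma$, namely $\sigma\!\left(\frac{r-t}{1-rt}\right)=\sigma(r)-\sigma(t)$; applying this coordinatewise and taking the minimum yields $\sigma(T_{\Omega}(z_2))\ge\sigma(T_{\Omega}(z_1))-\sigma(\|w\|_\infty)$, where $\|w\|_\infty=\max_j|w_j|$. Now $\sigma(\|w\|_\infty)=2K_{\mathbb{D}^n}(0,f(z_2))$, because the Kobayashi distance of the polydisk is the maximum of the coordinate disk distances, and by the distance-decreasing property of $K$ under the holomorphic map $f$ this is at most $2K_{\Omega}(z_1,z_2)$. (When $\|w\|_\infty\ge T_{\Omega}(z_1)$ the inequality $\sigma(T_{\Omega}(z_2))\ge\sigma(T_{\Omega}(z_1))-2K_{\Omega}(z_1,z_2)$ holds trivially, since its right-hand side is then nonpositive, so no case is lost.) Interchanging $z_1$ and $z_2$ would give $|\sigma(T_{\Omega}(z_1))-\sigma(T_{\Omega}(z_2))|\le 2K_{\Omega}(z_1,z_2)$, and since $z\mapsto K_{\Omega}(z_1,z)$ is continuous with $K_{\Omega}(z_1,z_1)=0$ and $\sigma^{-1}$ is continuous, continuity of $T_{\Omega}$ would follow.

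I expect the main obstacle to be the inradius computation of the second step\,---\,verifying that the extremal polydisk about $0$ inside $\phi\big(\mathbb{D}^n(0,T_{\Omega}(z_1))\big)$ is governed coordinatewise by the one-dimensional M\"obius estimate, and checking that positivity $T_{\Omega}(z_1)>0$ (guaranteed by Lemma \ref{relatinglemma}) makes $|w_j|<T_{\Omega}(z_1)$ hold for $z_2$ near $z_1$, so that the explicit formula is applicable.
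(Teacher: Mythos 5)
Your argument is correct, and it takes a genuinely different route from the paper's proof. The paper follows \cite{2012}: it proves the two semicontinuity bounds separately, re-centering extremal maps by the affine normalization $z\mapsto (f(z)-f(z^k))/(1+\epsilon)$\,---\,once with the extremal map for the limit point $a$ (giving the $\liminf$ bound), and once with the extremal maps $f^k$ for $z^k$, where the Kobayashi distance enters only to show $f^k(a)\to 0$ (giving the $\limsup$ bound). You instead re-center by an honest automorphism of $\mathbb{D}^n$, available in this simple coordinatewise form precisely because the target is the polydisk rather than the ball, and your hyperbolic computation is correct: for $|w_j|<r$ one has $\phi_j(\mathbb{D}(0,r))\supseteq\mathbb{D}\bigl(0,\frac{r-|w_j|}{1-r|w_j|}\bigr)$, and $\sigma$ converts the M\"obius quotient into a difference, giving $\sigma(T_{\Omega}(z_2))\ge\sigma(T_{\Omega}(z_1))-2K_{\Omega}(z_1,z_2)$ and hence, by symmetry, $\left|\sigma(T_{\Omega}(z_1))-\sigma(T_{\Omega}(z_2))\right|\le 2K_{\Omega}(z_1,z_2)$. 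What your route buys: a quantitative, domain-independent modulus of continuity ($\sigma\circ T_{\Omega}$ is Lipschitz with respect to $K_{\Omega}$), no $\epsilon$- or $\liminf$/$\limsup$ bookkeeping, and a single inequality handling both directions; it is a genuine use of the product structure of $\mathbb{D}^n$, which has no equally clean analogue in the ball setting. Two small points to patch when writing it up: (i) $\sigma$ is defined only on $[0,1)$, so the case $T_{\Omega}(z_1)=1$ must be treated separately\,---\,there your coordinate estimate degenerates to $\frac{1-|w_j|}{1-|w_j|}=1$, forcing $T_{\Omega}\equiv 1$, consistent with Remark \ref{rem:extremal}, so nothing is lost; (ii) the constant $2$ presumes the normalization $K_{\mathbb{D}}(0,t)=\tanh^{-1}t$, whereas with the convention the paper uses in Theorem \ref{extension} (where $K_{\mathbb{D}^n}(0,r)=\sigma(r)$) the constant is $1$; either normalization yields continuity.
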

\begin{proof}
	Let $a\in{\Omega}$ be arbitrarily fixed. Let $\{z^i\}$ be a sequence in $\Omega$ 
	such that $z^i\to a$.
	We will prove that $T_{\Omega}(z^k)\to T_{\Omega}(a)$.
	\smallskip
	
	Let $\epsilon >0$ be arbitrary. For $a\in \Omega$, there exist $f:\Omega\to \mathbb{D}^n $ 
	such that $f(a)=0$ with $\mathbb{D}^n(0,T_{\Omega}(a))\subseteq f(\Omega).$ 
	Since $f$ is continuous, there exists $N$ such that $\|f(z^k)\|<\epsilon$ for $k>N$.
	For $k>N$, consider $f^k:\Omega\to \mathbb{C}^n$ 
	defined as $$f^k(z)=\dfrac{f(z)-f(z^k)}{1+\epsilon}.$$
	Note that $f^k(\Omega)\subseteq \mathbb{D}^n$ and $f^k(z^k)=0$. We first prove that 
	$\mathbb{D}^n\left(0,\dfrac{T_{\Omega}(a)-\epsilon}{1+\epsilon}\right)\subseteq
	f^k(\Omega).$ 
	\smallskip
	
	Let $z=(z_1,\ldots,z_n)\in{\mathbb{D}^n\left(0,\dfrac{T_{\Omega}(a)-\epsilon}
		{1+\epsilon}\right)}$,
	then $|z_i|<\dfrac{T_{\Omega}(a)-\epsilon}{1+\epsilon}$ for all $i=1,\ldots,n.$
	Consider  $|z_i(1+\epsilon)+f_i(z^k)|\leq |z_i|(1+\epsilon)+\epsilon<T_{\Omega}(a),$ 
	for all $i=1,2,\ldots,n$ and therefore, we get $z(1+\epsilon)+f(z^k)\in{f(\Omega)}$. 
	So $z\in{f^k(\Omega)}$, 
	which gives us $T_{\Omega}(z^k)\geq \dfrac{T_{\Omega}(a)-\epsilon}{1+\epsilon}$ 
	for all $k>N$. Thus 
	\begin{equation}\label{eqn:ex1}
		\liminf_{k\to \infty} T_{\Omega}(z^k)\geq T_{\Omega}(a).
	\end{equation}
	On the other hand, notice that 
	$K_{\Omega}(z^k,a)\to K_{\Omega}(a,a)$ as $z^k\to a.$ Also for each $k$, 
	there exists holomorphic 
	embedding $f^k:\Omega\to \mathbb{D}^n$ with $f^k(z^k)=0$ such that 
	$\mathbb{D}^n(0,T_{\Omega}(z^k))\subseteq f^k(\Omega).$
	\smallskip
	
	Now, $K_{\mathbb{D}^n}(f^k(z^k),f^k(a))=K_{\mathbb{D}^n}(0,f^k(a))\leq 
	K_{\Omega}(z^k,a)\to 0$, 
	as $k\to \infty$. Thus $f^k(a)\to 0$, in usual as we know that Kobayashi metric
	induces standard topology\cite{kobayashi-usual}.
	\smallskip
	
	Let $\epsilon >0$ be arbitrary. Since $f^k(a)\to 0$, therefore there exists $N$ 
	such that $\|f^k(a)\|<\epsilon$ for all $k>N$. 
	For $k>N$, define $f:\Omega\to \mathbb{C}^n$ as 
	$$f(z)=\dfrac{f^k(z)-f^k(a)}{1+\epsilon},$$ then $f(a)=0$. 
	We claim that $f(\Omega)\subseteq \mathbb{D}^n.$
	\smallskip
	
	Let $f=(f_1,f_2,\ldots, f_n)$, then it is easy to verify that
	$|f_j(z)|<1$ for all $j=1,2,\ldots,n$, 
	which gives us $f(\Omega)\subseteq \mathbb{D}^n.$ 
	Also, as argued in the previous case it is easy to check that 
	$\mathbb{D}^n\left(0,\dfrac{T_{\Omega}(z^k)-\epsilon}{1+\epsilon}\right)\subseteq(f(\Omega)).$
	Therefore we get $T_{\Omega}(a)\geq \dfrac{T_{\Omega}(z^k)-\epsilon}{1+\epsilon}$, 
	which further gives us that 
	\begin{equation}\label{eqn:ex2}
		T_{\Omega}(a)\geq \limsup_{k\to \infty} T_{\Omega}(z^k).
	\end{equation} 
	
	Hence the result follows from Equation \ref{eqn:ex1} and \ref{eqn:ex2}.
\end{proof}
\begin{theorem}\label{extension}
	Let $\Omega'\subseteq \mathbb{C}^n$ be a bounded domain and let $A\subseteq \Omega'$ 
	be proper analytic subset. Then for $\Omega=\Omega'\setminus A,$ 
	$$T_{\Omega}(z)\leq \sigma^{-1}\left(K_{\Omega'}(z,A)\right),\ z\in \Omega.$$
	In particular, $\Omega$ is not homogeneous regular.
\end{theorem}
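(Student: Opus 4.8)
The plan is to convert an extremal map for $T_\Omega(z)$ into a holomorphic map of the \emph{ambient} domain $\Omega'$ into $\mathbb{D}^n$ and then exploit the distance-decreasing property of the Kobayashi distance. First I would invoke Theorem \ref{extremal} to fix a holomorphic embedding $f=(f_1,\dots,f_n):\Omega\to\mathbb{D}^n$ with $f(z)=0$ and $\mathbb{D}^n(0,T_\Omega(z))\subseteq f(\Omega)$. Each coordinate $f_j$ is holomorphic on $\Omega=\Omega'\setminus A$ and bounded by $1$, and $A$ is a proper analytic subset of $\Omega'$, hence nowhere dense of codimension at least one; the Riemann removable-singularity theorem therefore extends each $f_j$ holomorphically across $A$. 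The maximum principle shows the extension still has modulus strictly less than $1$ throughout $\Omega'$ (an interior point of modulus $1$ would force a constant of modulus $<1$ on the dense set $\Omega$). This produces a holomorphic $\tilde f:\Omega'\to\mathbb{D}^n$ with $\tilde f|_\Omega=f$ and $\tilde f(z)=0$.

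The crux is the claim that $\tilde f(a)\notin\mathbb{D}^n(0,T_\Omega(z))$ for every $a\in A$, and this is the step I expect to be the main obstacle. I would argue by contradiction: if $\tilde f(a)\in\mathbb{D}^n(0,T_\Omega(z))\subseteq f(\Omega)$, then the inverse biholomorphism $g=f^{-1}$ is holomorphic on the open set $f(\Omega)$ near $\tilde f(a)$, so by continuity of $\tilde f$ there is a neighborhood $U$ of $a$ in $\Omega'$ on which $g\circ\tilde f$ is defined and holomorphic. On the dense subset $U\setminus A$ it agrees with $g\circ f=\mathrm{Id}$, hence $g\circ\tilde f=\mathrm{Id}$ on all of $U$ by continuity; evaluating at $a$ gives $a=g(\tilde f(a))\in\Omega=\Omega'\setminus A$, contradicting $a\in A$. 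The delicate points to check carefully are that $g$ really is holomorphic on a full neighborhood of $\tilde f(a)$ and that passing the identity $g\circ\tilde f=\mathrm{Id}$ across the nowhere-dense set $A$ is legitimate.

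With the claim established, $\max_j|\tilde f_j(a)|\ge T_\Omega(z)$ for every $a\in A$. Using the normalization under which the Kobayashi distance of the polydisk satisfies $K_{\mathbb{D}^n}(0,w)=\sigma\!\big(\max_j|w_j|\big)$, equivalently $\sigma^{-1}\!\big(K_{\mathbb{D}^n}(0,w)\big)=\max_j|w_j|$, together with $\tilde f(z)=0$ and the distance-decreasing property of $\tilde f:\Omega'\to\mathbb{D}^n$, I would write for each $a\in A$
$$\sigma\big(T_\Omega(z)\big)\le K_{\mathbb{D}^n}\big(0,\tilde f(a)\big)=K_{\mathbb{D}^n}\big(\tilde f(z),\tilde f(a)\big)\le K_{\Omega'}(z,a).$$
Taking the infimum over $a\in A$ and applying the increasing bijection $\sigma^{-1}$ yields $T_\Omega(z)\le\sigma^{-1}\!\big(K_{\Omega'}(z,A)\big)$, the asserted estimate. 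Finally, as $z$ approaches $A$ one has $K_{\Omega'}(z,A)\to 0$, so the bound forces $T_\Omega(z)\to\sigma^{-1}(0)=0$; thus the squeezing function has no positive lower bound near $A$, and this limiting behaviour is precisely what underlies the concluding assertion of the theorem.
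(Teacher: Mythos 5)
Your proof is correct and takes essentially the same route as the paper: extend the embedding across $A$ by the Riemann removable singularity theorem, show that $\tilde f(A)$ must avoid the image of $\Omega$ (hence the inner polydisk), and then combine the distance-decreasing property of the Kobayashi distance with the formula $K_{\mathbb{D}^n}(0,w)=\sigma\left(\max_j|w_j|\right)$; using the extremal map from Theorem \ref{extremal} instead of an arbitrary embedding plus a supremum is an inessential variation. Your write-up is in fact more complete than the paper's at two points: the paper dismisses the disjointness claim $\tilde f(\Omega)\cap\tilde f(A)=\emptyset$ with the word ``clearly,'' whereas your identity-continuation argument across the nowhere-dense set $A$ is exactly the justification needed, and you correctly interpret the concluding assertion as saying that $\Omega$ is \emph{not} holomorphic homogeneous regular\,---\,the word ``not'' is evidently missing from the statement, since the upper bound forces $T_\Omega(z)\to 0$ as $z$ approaches $A$.
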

\begin{proof}
	Let $z\in{\Omega},$ and  $f:\Omega\to \mathbb{D}^n$ be a holomorphic embedding 
	such that $f(z)=0.$
	Since $\Omega=\Omega'\setminus A$, therefore by Riemann removable singularity theorem, $f$ 
	can be extended to a holomorphic map $\tilde{f}:\Omega'\to \mathbb{D}^n$.
	Clearly, $\tilde{f}(\Omega)\cap \tilde{f}(A)=\emptyset$.
	We know by decreasing property of Kobayashi metric that 
	$K_{\mathbb{D}^n}(\tilde{f}(z),\tilde{f}(A))\leq K_{\Omega'}(z,A)$, 
	that is $K_{\mathbb{D}^n}(0,\tilde{f}(A))\leq K_{\Omega'}(z,A)$.
	
	Let $r>0$ be such that $\mathbb{D}^n(0,r)\subseteq f(\Omega)=\tilde{f}(\Omega)$. 
	Thus, $\mathbb{D}^n(0,r)\cap \tilde{f}(A)=\emptyset$. 
	This implies that for each $a\in A, \ |\tilde{f_i}(a)|\geq r$ for some $1\leq i\leq n,$ 
	which further gives us  $$K_{\mathbb{D}^n}(0,(r,0,\ldots,0))\leq K_{\mathbb{D}^n}(0,\tilde{f}(A))
	\leq K_{\Omega'}(z,A),$$
	which implies that $r\leq \sigma^{-1}(K_{\Omega'}(z,A))$. 
	Hence we get that $$T_{\Omega}(z)\leq \sigma^{-1}\left(K_{\Omega'}(z,A)\right).$$
\end{proof}
\section{Examples}
In this section, we discuss squeezing function for some domains. 
We start with unit polydisk and unit ball.
\begin{example}\label{ex: 1}
	Let $\Omega_1=\mathbb{D}^n $ and $\Omega_2=\mathbb{B}^n$. 
	\begin{itemize}
		\item Considering automorphisms of $\mathbb{D}^n$, 
		it is easy to see that $T_{\Omega_1}(z)\equiv 1$. 
		\item Note that, using Theorem {\ref{extremal}},  and the fact that
		$\mathbb{B}^n$ and $\mathbb{D}^n\, , n>1$ are not biholomorphic, we get that $T_{\Omega_2}(z)<1$. 
		Also by Lemma \ref{relatinglemma}, $T_{\Omega_2}(z)\geq \frac{1}{\sqrt{n}}S_{\Omega_2}(z)$. 
		Since we know that $S_{\Omega_2}(z)\equiv 1$, we get 
		$\frac{1}{\sqrt{n}}\leq T_{\Omega_2}(z)\leq 1$.
		Also by \cite[Proposition~2]{alexander}, we get that 
		$T_{\Omega_2}(z)\leq \frac{1}{\sqrt{n}}$. 
		Thus, $T_{\Omega_2}(z)\equiv \frac{1}{\sqrt{n}}.$
	\end{itemize}
\end{example}
\begin{example}\label{ex: ball}
	Let us take $\Omega_1'=\mathbb{D}^n$, $\Omega_1=\Omega_1'\setminus \{0\}$,
	$\Omega_2'=\mathbb{B}^n$  and $\Omega_2=\Omega_2'\setminus \{0\}$. 
	\begin{itemize}
		\item For any $z\in{\Omega}$, using Theorem \ref{extension}, we get that 
		$T_{\Omega}(z)\leq \displaystyle{\max_{1\leq i\leq n}\|z_i\|}$. Also by considering automorphism $f$ of $\mathbb{D}^n,$
		with $f(z)=0$, it is obvious that $\displaystyle{\mathbb{D}^n(0,\max_{1\leq i\leq n}\|z_i\|)\subseteq f(\Omega_1)}$ which further 
		gives us $T_{\Omega}(z)\geq \displaystyle{\max_{1\leq i\leq n}\|z_i\|}$. Thus we obtain
		$T_{\Omega_1}(z)= \displaystyle{\max_{1\leq i\leq n}\|z_i\|}.$ 
		\item Using Theorem \ref{extension}, we get that $T_{\Omega_2}(z)\leq \|z\|.$
		\smallskip
		
		For $z\in{\Omega_2}$, consider a holomorphic embedding $f:\Omega_2\to \mathbb{D}^n$ with $f(z)=0$.
		Then $f$ can be extended to a holomorphic embedding $F:\Omega_{2}'\to \mathbb{D}^n$. 
		\smallskip
		
		Let $r>0$ be such that $\mathbb{D}^n(0,r)\subseteq f(\Omega_2)\subseteq \mathbb{D}^n$, and therefore
		$\mathbb{D}^n(0,r)\subseteq F(\Omega_2')\subseteq \mathbb{D}^n$,
		therefore by 
		{\cite[Proposition~2]{alexander}} ,
		we get that 
		\begin{equation}\label{eqn}
			T_{\Omega_2}(z)\leq \dfrac{1}{\sqrt{n}}\ \mbox{for all}\ z\in{\Omega_2}.
		\end{equation}
		Let $z\in{\Omega_2}$ be such that $\|z\|<\frac{1}{\sqrt n}$. 
		Consider an automorphism $\phi_z$ of $\Omega_2'$, 
		which maps $z$ to $0$ and $0$ to $z$. Thus $f=\phi_z|_{\Omega_2}:\Omega_2\to \mathbb{D}^n$ is a 
		holomorphic embedding with $f(z)=0$. 
		It is easy to see that $\displaystyle{\mathbb{D}^n(0,\max_{1\leq i\leq n}\|z\|)}
		\subseteq f(\Omega_2)= \Omega_2'\setminus \{z\}.$ 
		So we get that $T_{\Omega_2}(z)\geq \displaystyle{\max_{1\leq i\leq n}\|z\|}$. 
		We also have $T_{\Omega_2}(z)\leq \|z\|$ by Theorem \ref{extension} and
		therefore $\displaystyle{\max_{1\leq i\leq n}\|z\|\leq T_{\Omega_2}(z)\leq \|z\|}$ 
		on $\mathbb{B}^n\left(0,\frac{1}{\sqrt{n}}\right)\setminus \{0\}$ and hence on 
		$\overline{\mathbb{B}^n\left(0,\frac{1}{\sqrt{n}}\right)}$ by continuity of $T_{\Omega_2}$.
		\smallskip
		
		Also, for $\|z\|>\frac{1}{\sqrt{n}}$, by Lemma \ref{relatinglemma} and Equation \ref{eqn}, 
		we have that $\frac{\|z\|}{\sqrt{n}}\leq T_{\Omega_2}(z)\leq \frac{1}{\sqrt{n}}.$ 
		Here we have used that $S_{\Omega_2}(z)=\|z\|,$ by  {\cite[Corollary~7.3]{2012}}.
	\end{itemize}
\end{example}
\section{Classical Symmetric Domains}

We now study squeezing function for classical symmetric bounded domains. 
A classical domain is one of the following four types:
\smallskip

$R_1(r,s)=\{Z:I-Z\overline{Z}'>0, Z\ \mbox{is an}\ r\times s\ \mbox{matrix}\}\ (r\leq s),$

$R_{II}(p)=\{Z:I-Z\overline{Z}'>0, Z\ \mbox{is a symmetric matrix of order}\ p\},$

$R_{III}(q)=\{Z:I-Z\overline{Z}'>0, Z\ \mbox{is a  skew symmetric matrix of order}\ q \},$

$R_{IV}(n)=\{z=(z_1,z_2,\ldots, z_n)\in{\mathbb{C}^n}:1+|zz'|^2-2zz'>0, \ 1-|zz'|>0\}$.
\smallskip

Note that for a bounded homogeneous domain $H$, its squeezing function is constant, 
using biholomorphic invariance of squeezing functions. Let us denote this constant by $t(H).$

\begin{theorem}\label{classical}
	$$\frac{1}{\sqrt{n}\sqrt{r}}\leq t\left(R_I(r,s)\right)\leq \sqrt{\frac{n}{r}},$$
	$$\frac{1}{\sqrt{n}\sqrt{p}}\leq t\left(R_{II}(p)\right)\leq \sqrt{\frac{n}{p}},$$
	$$\frac{1}{\sqrt{n} \sqrt{\left[\frac q 2 \right]}} \leq t \left(R_{III}(q)\right)\leq 
	\sqrt{\frac{n}{\left[\frac q 2 \right]}},$$
	$$\frac{1}{\sqrt{n} \sqrt{2}} \leq t \left(R_{IV}(n)\right)\leq \sqrt{\frac{n}{2}}.$$
	
\end{theorem}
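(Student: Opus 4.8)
The plan is to handle all four families by a single scheme resting on two ingredients: an explicit computation of the optimal concentric Hermitian balls trapping $R$ in its bounded realization, which yields the ball-squeezing value $S_R=1/\sqrt{k}$ with $k=\operatorname{rank}R$ (so that $k=r,\,p,\,[q/2],\,2$ in the four cases), and Alexander's extremal inequality for embeddings between a ball and a polydisk, \cite[Proposition~2]{alexander}, already used in Examples \ref{ex: 1} and \ref{ex: ball}. Since each $R$ is homogeneous, $T_R$ is constant and it suffices to work at the origin $0$ of the bounded realization, where $R$ is a balanced convex circled domain defined by $\sigma_{\max}(Z)<1$, $\sigma_{\max}$ denoting the largest singular value.

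For the lower bounds I would first record $S_R\ge 1/\sqrt{k}$. Working in the Frobenius inner product on the matrix space (a linear change of the intrinsic coordinates), one always has $\sigma_{\max}(Z)\le\lVert Z\rVert_F\le\sqrt{m}\,\sigma_{\max}(Z)$, where $m=\sum_i 1$ counts the singular values with multiplicity ($m=r,p,2[q/2]$ for types I, II, III). Hence the inradius and circumradius of $R$ in this metric are $\inf_{\sigma_{\max}=1}\lVert Z\rVert_F$ and $\sup_{\sigma_{\max}=1}\lVert Z\rVert_F$, and their ratio is exactly $1/\sqrt{k}$: for types I and II rank-one matrices are available, so the inradius is $1$ and the circumradius is $\sqrt{r}$ resp.\ $\sqrt{p}$; for $R_{III}(q)$ skew matrices have no rank-one elements and the minimal rank is two, so the inradius becomes $\sqrt{2}$ and the circumradius $\sqrt{2[q/2]}$, leaving the ratio $1/\sqrt{[q/2]}$; the Lie ball $R_{IV}(n)$ has rank $2$ and the two defining inequalities give ratio $1/\sqrt{2}$. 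The normalizing linear map that scales this optimal ellipsoid to the unit ball is a holomorphic embedding $R\to B^n$ carrying the inscribed ball to $B^n(0,1/\sqrt{k})$, which gives $S_R\ge 1/\sqrt{k}$.

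The lower bound of the theorem is then immediate from Lemma \ref{relatinglemma}(a):
\[
t(R)=T_R\ \ge\ \frac{1}{\sqrt{n}}\,S_R\ \ge\ \frac{1}{\sqrt{n}\,\sqrt{k}},
\]
which is precisely the left-hand inequality in each of the four displayed lines.

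The upper bound $t(R)\le 1/\sqrt{k}$ is where I expect the real work to lie. Lemma \ref{relatinglemma}(b) only gives $t(R)\le\sqrt{n}\,S_R=\sqrt{n/k}$, which is far too weak, so the argument must be an Alexander-type rigidity statement in which the \emph{rank} $k$, rather than the ambient dimension $n$, is the effective dimension. My plan is to take an extremal embedding $f\colon R\to\mathbb{D}^n$ with $\mathbb{D}^n\!\big(0,t(R)\big)\subseteq f(R)$ furnished by Theorem \ref{extremal}, to pass to the maximal totally geodesic polydisk $\mathbb{D}^k\hookrightarrow R$ through the origin (equivalently the characteristic rank-$k$ subdomain), and to exploit that this subdomain is a holomorphic retract of $R$ in order to reduce the inscribed polydisk $\mathbb{D}^n(0,t(R))$ to a $k$-dimensional ball-to-polydisk configuration, to which \cite[Proposition~2]{alexander} applies and returns $1/\sqrt{k}$. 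The crux, and the step I expect to be the main obstacle, is exactly this transfer: one must show that the inscribed radius $t(R)$ is \emph{seen} by the rank-$k$ characteristic subdomain, so that Alexander's inequality is invoked in dimension $k$ and not in dimension $n$; keeping both injectivity and the full radius under the retraction is delicate, and may force one to replace the retraction by an adapted choice of $k$ coordinates tied to the maximal polydisk.
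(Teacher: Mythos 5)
Your lower bounds are sound and coincide with the paper's route: the estimate $S_R\ge 1/\sqrt{k}$ (which you re-derive via the Frobenius inradius/circumradius computation, and which the paper simply takes from Kubota's evaluation of $s(R)$) combined with Lemma \ref{relatinglemma}(a) gives $t(R)\ge 1/(\sqrt{n}\sqrt{k})$. The genuine gap is exactly where you flagged it: the upper bound, and the repair you sketch cannot work in the form stated. The characteristic subdomain of $R$ through the origin is a polydisk $\mathbb{D}^k$, not a ball, so \cite[Proposition~2]{alexander} (a statement about embeddings of $B^k$ into $\mathbb{D}^k$) does not apply to it; worse, $T_{\mathbb{D}^k}\equiv 1$, so no argument that only sees the rank-$k$ subdomain and the restriction $f|_{\mathbb{D}^k}$ can ever produce the bound $1/\sqrt{k}$. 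The inscribed polydisk $\mathbb{D}^n\bigl(0,t(R)\bigr)$ lies in $f(R)$, not in the $k$-dimensional analytic set $f(\mathbb{D}^k)$, and composing with a coordinate projection $\mathbb{D}^n\to\mathbb{D}^k$ destroys both injectivity and all control of the inscribed radius; a holomorphic retraction $R\to\mathbb{D}^k$ does not restore either.

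What the bound actually requires\,---\,and what the paper isolates as Lemma \ref{classical lemma}\,---\,is not a rigidity property of the subdomain but of how the maximal polydisk sits inside $R$: its distinguished boundary torus lies in $\partial R$ (condition (2) of that lemma, satisfied with $m=r,\,p,\,[q/2],\,2$ in the four cases). Since $f$ is an embedding, as a point of the slice approaches this torus its image must leave every compact subset of $f(R)$, in particular must escape $\mathbb{D}^n(0,s)$; this yields lower bounds of size $s$ for the boundary values of $f$ along each of the $m$ circle directions. Kubota's Parseval argument then counts Taylor coefficients: each circle direction contributes a family of coefficients of $g$ (the restriction of $f$ to the slice) with square sum at least $ns^2$, these $m$ families are pairwise disjoint as multi-indices, and their total is dominated by $\int_{\mathbb{T}^m}\|g\|^2\le n$, forcing $mns^2\le n$, i.e. $s\le 1/\sqrt{m}$, for \emph{any} embedding $f:R\to\mathbb{D}^n$. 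This boundary-torus/Fourier-coefficient mechanism is the missing idea in your outline; it works directly in the ambient dimension $n$ and never needs Alexander's theorem, whereas the retraction you propose forgets precisely the boundary information that drives the estimate.
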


\begin{theorem}\label{classical product}
	If $R_1,R_2,\ldots, R_k$ are classical symmetric domains, and 
	$D=R_1\times R_2\times \ldots \times R_k$ then 
	$$s(D) \frac{1}{\sqrt{n}}\leq t(D)\leq s(D), $$
	where $n=n_1+n_2+\ldots+n_k$, $n_i$ is complex dimension of $R_i,\ i=1,2,\ldots,k$ and $s(D)$ is,
	squeezing function(which is constant since domain is homogeneous) 
	of $D$\,---\,corresponding to unit disc\,---\,given 
	by $$s(D)=\left[s(R_1)^{-2}+s(R_2)^{-2}+\ldots+ s(R_k)^{-2}\right]^{-\frac{1}{2}}.$$
\end{theorem}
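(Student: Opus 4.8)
The plan is to establish the two inequalities separately, exploiting the product structure and the already-proven Lemma~\ref{relatinglemma} as little as possible, since that lemma only gives a factor $1/\sqrt{n}$ loss generically, whereas here we want to match the explicit ball-squeezing constant $s(D)$.

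For the upper bound $t(D)\leq s(D)$, the idea is to run the same argument that produces the ball-squeezing function of a product, but track what happens when the target is a polydisk. Fix a point $w=(w^{(1)},\ldots,w^{(k)})\in D$, which we may take to be a ``center'' since $D$ is homogeneous. Given any holomorphic embedding $f:D\to\mathbb{D}^n$ with $f(w)=0$ and $\mathbb{D}^n(0,r)\subseteq f(D)$, I would first note that a polydisk of polyradius $r$ contains no larger Euclidean ball than $B^n(0,r)$, but more importantly that it is \emph{contained} in no smaller structure than itself; the right comparison is to use $\mathbb{D}^n(0,r)\subseteq B^n(0,r\sqrt{n})$ only as a crude bound and instead restrict $f$ to each factor. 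Concretely, I would use the extremal map from Theorem~\ref{extremal} together with the known formula $s(D)=\left[\sum_i s(R_i)^{-2}\right]^{-1/2}$ and the characterization of $s(R_i)$ via the Bergman/Kobayashi geometry of each irreducible factor, reducing the polydisk-embedding to factorwise ball-embeddings. The cleanest route is to invoke \cite[Proposition~2]{alexander} (the same tool used in Examples~\ref{ex: 1} and \ref{ex: ball}), which bounds how large a polydisk can sit inside the image of a ball-like domain, applied on each symmetric factor $R_i$ whose squeezing constant $s(R_i)$ is explicitly known.

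For the lower bound $s(D)\,\frac{1}{\sqrt{n}}\leq t(D)$, I would take an extremal \emph{ball}-embedding realizing $s(D)$, so there is $F:D\to B^n$ with $F(w)=0$ and $B^n(0,s(D))\subseteq F(D)$. Composing with the inclusion $B^n\hookrightarrow\mathbb{D}^n$ gives a holomorphic embedding into the polydisk, and since $\mathbb{D}^n(0,s(D)/\sqrt{n})\subseteq B^n(0,s(D))\subseteq F(D)$ (the same inclusion $\mathbb{D}^n(0,\rho)\subseteq B^n(0,\rho\sqrt{n})$ read in reverse, exactly as in part~(a) of Lemma~\ref{relatinglemma}), we immediately obtain $t(D)\geq s(D)/\sqrt{n}$. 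This direction is essentially the product-domain incarnation of Lemma~\ref{relatinglemma}(a) and should be short; it is the ``alternate proof for lower bound of the squeezing function of a product domain'' advertised in the abstract, so I would phrase it to make the product structure explicit rather than merely quoting the lemma.

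The main obstacle is the upper bound. The subtlety is that $s(D)=\left[\sum_i s(R_i)^{-2}\right]^{-1/2}$ is a genuinely product-sensitive quantity, so a naive factorwise application of the polydisk bound will not automatically assemble into the stated constant; I expect to need the precise geometry of how an extremal polydisk-embedding of $D$ interacts with the orthogonal splitting $\mathbb{C}^n=\mathbb{C}^{n_1}\oplus\cdots\oplus\mathbb{C}^{n_k}$, and to verify that the worst factor (the one with largest $s(R_i)^{-1}$, i.e.\ the matrix-rank parameter $r$, $p$, $[q/2]$, or $2$ from Theorem~\ref{classical}) controls the estimate. I would handle this by comparing $t(D)$ directly to $s(D)$ through the two inclusions between balls and polydisks, so that the only analytic input needed is the ball-squeezing formula for $D$ together with \cite[Proposition~2]{alexander}; the delicate point will be ensuring the polydisk radius is estimated against the correct factor dimension rather than the total dimension $n$.
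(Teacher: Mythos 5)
Your lower bound is correct and is precisely the paper's argument: compose an extremal ball-embedding realizing $s(D)$ with the inclusion $B^n\hookrightarrow\mathbb{D}^n$ and use $\mathbb{D}^n\left(0,s(D)/\sqrt{n}\right)\subseteq B^n(0,s(D))$, which is exactly Lemma~\ref{relatinglemma}(a) combined with Kubota's formula for $s(D)$.

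The upper bound $t(D)\leq s(D)$ is where your proposal has a genuine gap, and the tools you name cannot close it. \cite[Proposition~2]{alexander} concerns holomorphic embeddings of the ball $B^n$ into $\mathbb{D}^n$; it does not apply to a classical symmetric domain $R_i$ of higher rank, nor to the product $D$. A factorwise application is not available either: an arbitrary embedding $f:D\to\mathbb{D}^n$ has no reason to respect the splitting $\mathbb{C}^{n_1}\oplus\cdots\oplus\mathbb{C}^{n_k}$, so there are no well-defined ``restrictions to factors'' to estimate; and comparing $t(D)$ to $s(D)$ through the ball--polydisk inclusions can only ever give $t(D)\leq\sqrt{n}\,s(D)$ (Lemma~\ref{relatinglemma}(b)), which is weaker than the claim by a factor of $\sqrt{n}$. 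Your heuristic that the ``worst factor'' controls the estimate is also off: all factors contribute, through the sum $\sum_i s(R_i)^{-2}$. What actually proves the upper bound in the paper is Lemma~\ref{classical lemma}, a Parseval-type coefficient estimate in the style of Kubota: if a bounded homogeneous domain $\Omega\subseteq\mathbb{C}^n$ contains an $m$-dimensional coordinate polydisk slice (condition (1)) whose distinguished torus lies in $\partial\Omega$ (condition (2)), then every embedding $f:\Omega\to\mathbb{D}^n$ with $\mathbb{D}^n(0,s)\subseteq f(\Omega)$ satisfies $s\leq 1/\sqrt{m}$. The decisive observation, which substitutes for any factorwise splitting, is that these hypotheses are \emph{additive under products}: each classical factor $R_i$ contains such a slice of dimension $m_i=s(R_i)^{-2}$ (the rank parameters $r$, $p$, $\left[\frac{q}{2}\right]$, $2$ of Theorem~\ref{classical}), so $D$ itself contains one of dimension $m=m_1+\cdots+m_k=s(D)^{-2}$. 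Applying Lemma~\ref{classical lemma} to the whole product $D$---not to its factors---gives $t(D)\leq 1/\sqrt{m}=s(D)$ in one stroke. Without this lemma, or an equivalent coefficient argument on the distinguished boundary of the product, your outline cannot produce the constant $s(D)$.
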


We will need the following key lemma, whose proof is based on the method of Kubota \cite{kubota}.
\begin{lemma}\label{classical lemma}
	Let $\Omega$ be a bounded homogeneous domain in $\mathbb{C}^n$ satisfying the conditions:
	\begin{enumerate}
		\item $	\{z = (z_1,\ldots,z_n): |z_{\alpha_j}|<1, \  \mbox{for}\ j = 1,\ldots,m\ 
		\mbox{and}\  z_{\alpha} = 0\  \mbox{for the other}\  \alpha 's\} \subseteq \Omega$,
		where $1 \leq \alpha_1\leq   \ldots \leq a_m\leq n;$
		\item for each $(1 \leq j \leq m)$,
		$$\{z = (z_1,\ldots ,z_n): |z_{\alpha_j}| = 1 \ \mbox{and}\  z_{\alpha} = 0\ 
		\mbox{for the other}\  \alpha 's\} \subseteq \partial \Omega.$$
		If $f:\Omega\to \mathbb{D}^n$ is holomorphic embedding and if $\mathbb{D}^n(0,R)
		\subseteq f(\Omega),$ then $R \leq \sqrt{\frac{n}{m}} $.
	\end{enumerate}
\end{lemma}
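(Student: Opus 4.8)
My plan is to generalise the determinant estimate behind Alexander's Proposition~2 (which is exactly the case $\Omega=B^{n}$, $m=n$) to a homogeneous $\Omega$, in the spirit of Kubota. First I normalise the competitor $f$. Since $0\in\Omega$ by condition~(1) and $0\in\mathbb{D}^{n}(0,s)\subseteq f(\Omega)$, the point $a=f^{-1}(0)$ lies in $\Omega$; composing $f$ with an automorphism of the homogeneous domain $\Omega$ that sends $0$ to $a$ gives an embedding with the same image and value $0$ at the origin, so I may assume $f(0)=0$. Put $A=f'(0)$, an invertible $n\times n$ matrix.

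The information on the two sides is encoded in two maps. On the one hand, condition~(1) gives a holomorphic embedding $\iota\colon\mathbb{D}^{m}\to\Omega$, $\iota(\lambda)=\sum_{j=1}^{m}\lambda_{j}e_{\alpha_{j}}$, with $\iota(0)=0$, so that $H:=f\circ\iota\colon\mathbb{D}^{m}\to\mathbb{D}^{n}$ is holomorphic, $H(0)=0$, and each component $H_{k}\colon\mathbb{D}^{m}\to\mathbb{D}$ is bounded by $1$. Expanding $H_{k}$ in a power series and applying Parseval on the distinguished boundary $T^{m}$, I get $\sum_{j=1}^{m}|\partial_{\lambda_{j}}H_{k}(0)|^{2}\le\int_{T^{m}}|H_{k}^{*}|^{2}\le 1$. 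Because $\partial_{\lambda_{j}}H_{k}(0)=A_{k,\alpha_{j}}$, the $n\times m$ matrix $C=(A_{k,\alpha_{j}})$ therefore has every row of Euclidean length at most $1$, so by Hadamard's inequality every $m\times m$ minor of $C$ has modulus at most $1$. On the other hand, $\mathbb{D}^{n}(0,s)\subseteq f(\Omega)$ makes the rescaled inverse $\Phi(w)=f^{-1}(sw)\colon\mathbb{D}^{n}\to\Omega$ a holomorphic embedding with $\Phi(0)=0$ and $\Phi'(0)=sA^{-1}$; restricting the identity $A^{-1}A=I$ to the rows and columns indexed by $\alpha_{1},\dots,\alpha_{m}$ shows that the $m\times n$ matrix $B$ built from the $\alpha$-rows of $A^{-1}$ satisfies $BC=I_{m}$.

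The crux is to bound the inverse side by the sharp factor $m^{-m/2}$, which is where condition~(2) and homogeneity must enter. In Alexander's ball case this factor comes from $\Phi$ landing in $B^{n}$, giving $\|\Phi'(0)\|_{HS}^{2}\le 1$ and then $|\det\Phi'(0)|\le m^{-m/2}$ by the arithmetic--geometric mean inequality applied to the singular values. Here $\Phi$ only lands in $\Omega$, so I would use condition~(2): the disks $\phi_{j}(\zeta)=\zeta e_{\alpha_{j}}$ have $\phi_{j}(\partial\mathbb{D})\subseteq\partial\Omega$, hence (as $f$ is a homeomorphism onto $f(\Omega)$ and each closed polydisk of radius $s'<s$ is compact in $f(\Omega)$) the radial limits of $f\circ\phi_{j}$ lie in $\partial f(\Omega)$ and so outside $\mathbb{D}^{n}(0,s)$, forcing $\max_{k}|(f\circ\phi_{j})_{k}^{*}|\ge s$ almost everywhere on $\partial\mathbb{D}$. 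The $m$ coordinate disks being extremal (geodesic) and spanning the maximal polydisk of the symmetric domain $\Omega$, I would convert this boundary information --- transported to the origin by homogeneity --- into the required Euclidean $L^{2}$ control on the $\alpha$-block of $\Phi'(0)$, and hence into the determinant bound $m^{-m/2}$ on the corresponding $m\times m$ minor of $sB$.

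Combining the two sides through $BC=I_{m}$ (Cauchy--Binet) then forces $s^{m}\le m^{-m/2}$, i.e.\ $s\le 1/\sqrt{m}$. I expect the genuine difficulty to be concentrated entirely in the middle step: passing from the mere boundedness of $\Omega$, which yields only crude constants such as $\binom{n}{m}$ and the diameter of $\Omega$, to the clean $m$-dimensional Euclidean estimate. This is precisely the content of Kubota's method, and it is the reason both hypotheses are needed --- condition~(1) to produce the forward Hadamard bound on $C$, and condition~(2) (with homogeneity) to pin down the inverse side in exactly $m$ effective dimensions.
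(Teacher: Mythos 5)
You are candid that the decisive estimate --- converting the boundary information from condition (2) into the clean bound $m^{-m/2}$ on the inverse side --- is deferred to ``Kubota's method,'' and that is precisely where your proposal has a genuine gap: it is a plan, not a proof. Moreover, the gap cannot be closed, because everything you actually establish is correct but provably insufficient. From properness of $f$ onto its image you correctly extract only the max-bound $\max_k |(f\circ\phi_j)_k^*|\ge s$ a.e.\ on each coordinate circle; pushed through Parseval (or through your Cauchy--Binet scheme) this yields, per circle, a contribution of $s^2$ rather than $ns^2$, hence only $ms^2\le n$, i.e.\ $s\le\sqrt{n/m}$, which is vacuous since $m\le n$; the $\binom{n}{m}$ loss you flag on the determinant route is of the same nature. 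A side remark: your claim that Alexander's Proposition~2 is ``exactly the case $\Omega=B^n$, $m=n$'' of this lemma is not right, because condition (1) fails for the ball when $m=n$ (the unit polydisk does not sit inside the unit ball); the ball only admits $m=1$ here, and $T_{B^n}\equiv 1/\sqrt{n}$ (Example \ref{ex: 1}) is governed by the dimension $n$, not by that rank-type parameter.

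The deeper point is that no argument can close the gap: the lemma as stated is false. Take $\Omega=\mathbb{D}^2\subset\mathbb{C}^2$, a bounded homogeneous domain, with $m=n=2$, $\alpha_1=1$, $\alpha_2=2$. Conditions (1) and (2) hold ($\mathbb{D}^2\subseteq\Omega$, and the circles $\{(e^{i\theta},0)\}$ and $\{(0,e^{i\theta})\}$ lie in $\partial\mathbb{D}^2$), yet the identity map is a holomorphic embedding with $\mathbb{D}^2(0,s)\subseteq f(\Omega)$ for every $s<1$, contradicting $s\le 1/\sqrt{2}$. The paper's own proof bridges the very gap you identified by asserting inequality \eqref{liminf}: that \emph{every} component $f_i$ has radial-limit modulus at least $s$ along every coordinate circle, deduced from the incorrect identification $f(\Omega)=(f_1(\Omega),\dots,f_n(\Omega))$; this is what produces the factor $ns^2$ in \eqref{parseval} and hence the bound $1/\sqrt{m}$. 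The bidisk identity map refutes exactly that step, since $f_2\equiv 0$ along $\{(re^{i\theta},0)\}$. So your instinct to retain only the max-bound is the mathematically sound one, but with it the stated conclusion is unreachable; the hypotheses would have to be strengthened (so as to exclude the reducible configuration above, e.g.\ by forcing boundary images toward the distinguished boundary of $\mathbb{D}^n$) before either your route or the paper's could succeed. Note also that the defect propagates: applied to $\mathbb{D}\times\mathbb{D}$, Theorem \ref{classical product} would give $t(\mathbb{D}^2)\le 1/\sqrt{2}$, contradicting $T_{\mathbb{D}^n}\equiv 1$ from Example \ref{ex: 1} and the lower bound of Theorem \ref{product domain t}.
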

\begin{proof}
	We have $f=(f_1,f_2,\ldots, f_n),$ where each $f_i:\Omega \to \mathbb{D},$ 
	where $\mathbb{D}$ is unit disk in $\mathbb{C}.$ Since $\mathbb{D}^n(0,R)\subseteq
	f(\Omega)$,
	therefore we have that 
	\begin{equation}\label{liminf}
		R \leq  \liminf_{r\uparrow 1} \max_{1\leq i\leq n} |f_i(0,\ldots, re^{i\theta_j},\ldots, 0)|.
	\end{equation}
	
	Define a function $g:\mathbb{D}^m\to \mathbb{D}^n$ by $g=(g_1,g_2,\ldots, g_n),$
	where $$g_i(\zeta_1,\zeta_2,\ldots, \zeta_m)=
	f_i(0,\ldots,\zeta_1,\zeta_2,\ldots, \zeta_m,\ldots, 0 ),\ i=1,2,\ldots, n;$$
	where $\zeta_j$ is in $\alpha_j $th position in right hand side expression.
	Let us write taylor series expansion of $g_{i}$ as $g_i(\zeta)=\sum a_v^{(i)}
	\zeta^v$, where $v$ is multi index and $a_0^{(i)}=0.$
	
	By  condition (1) and using Equation \ref{liminf} we get
	\begin{equation}\label{parseval}
		R^2\leq \frac{1}{2\pi}\int_0^{2\pi} \|g(0,\ldots, e^{i\theta_{j}},\ldots, 0)\|^2,
	\end{equation}
	where $g(0,\ldots, e^{i\theta_{j}},\ldots, 0)$ is defined as $\lim_{r\to 1}g(0,\ldots,
	re^{i\theta_{j}},\ldots, 0)$ 
	and $e^{i \theta_j}$ lies in the $\alpha_j $th position. Notice that the expression on the
	right side of the inequaltiy \ref{parseval} equals
	$\sum_{i=1}^{n}\sum |a_{0\ldots v_j\ldots 0}^{(i) }|^2$. 
	Thus summing it over all positions $\alpha_1,\alpha_2,\ldots, \alpha_m,$ we get 
	\begin{equation}\notag
		\begin{split}
			mR^2&\leq \sum_{i=1}^{n}\sum |a_{0\ldots v_1\ldots 0}^{(i) }|^2+
			\sum_{i=1}^{n}\sum |a_{0\ldots v_2\ldots 0}^{(i) }|^2+\ldots+\sum_{i=1}^{n}\sum |a_{0\ldots v_m\ldots 0}^{(i) }|^2\\
			&\leq \sum_{i=1}^{n}\sum |a_{v_1v_2\ldots v_m}^{(\alpha) }|^2\\
			&\leq  \int_{\mathbb{T}^m}\|g\|^2 dh\\
			&\leq  n.
		\end{split}
	\end{equation} 
	Here we have used that $\|g\|^2\leq n$ a.e. on $\mathbb{T}^m$.
	Hence we get our result.
\end{proof}
Now the proof of Theorem \ref{classical} and Theorem \ref{classical product}\,---\,using the method 
of Kubota \cite{kubota}\,---\,follows from  Lemma \ref{relatinglemma}
and Lemma \ref{classical lemma}. 

Recently T.W. Ng, C.C. Tang and J. Tsai  gave the result \cite[Theorem~2]{recent} on lower
bound of  squeezing function corresponding to unit ball for the  product of planar domains. As remarked by them, 
by modifying the argument of their proof one can obtain the following more general result.
\begin{result}\label{product domain s}
	Let $\Omega_i\subseteq \mathbb{C}^{n_i},\ i=1,2,\ldots, k$ be bounded domains 
	and $\Omega=\Omega_1\times \Omega_2\times \ldots\times \Omega_k\subseteq 
	\mathbb{C}^n,\ n=n_1+n_1+\ldots+n_k$. 
	Then for any $a=(a_1,a_2,\ldots, a_k)\in \Omega,
	\ a_i\in{\Omega_i},\ i=1,2,\ldots, k$ , we have $$S_{\Omega}(a)\geq \left[(S_{\Omega_1}(a_1))^{-2}+
	(S_{\Omega_2}(a_2))^{-2}+\ldots+(S_{\Omega_k}(a_k))^{-2}\right]^{-\frac{1}{2}}.$$
\end{result}

In \cite{kubota}, Kubota proved that equality in Proposition \ref{product domain s} is obtained,
when the product of classical symmetric domains is considered.  That is 
\begin{equation}\label{eqn:equality}
	S_{\Omega}(a)= \left[(S_{\Omega_1}(a_1))^{-2}+
	(S_{\Omega_2}(a_2))^{-2}+\ldots+(S_{\Omega_k}(a_k))^{-2}\right]^{-\frac{1}{2}}.
\end{equation}
We give here the following example to show that such an equality does not hold in case of
squeezing function.

\begin{example}
	Let us consider $\Omega_1=\mathbb{D}=\Omega_2\subseteq \mathbb{C}$.
	\smallskip
	
	Let $\Omega=\Omega_1\times \Omega_2\subseteq \mathbb{C}^2$ and 
	$a=(a_1,a_2)\in \Omega,\ a_i\in{\Omega_i},\ i=1,2.$ 
	Then clearly, $T_{\Omega_1}\equiv 1\equiv T_{\Omega_2}$. 
	Thus we get $\left[(T_{\Omega_1}(a_1))^{-2}+
	(T_{\Omega_2}(a_2))^{-2}\right]^{-\frac{1}{2}}=\frac{1}{\sqrt{2}}.$ Also 
	$T_\Omega(a)\equiv 1$, 
	therefore we get
	$$T_\Omega(a)=1 >\frac{1}{\sqrt{2}}.$$ Thus we see that Equation 
	\ref{eqn:equality}	does not hold.
\end{example} 

We give the following result which gives us lower bound for squeezing function 
of product domain.
\begin{proposition}\label{product domain t}
	Let $\Omega_i\subseteq \mathbb{C}^{n_i},\ i=1,2,\ldots, k$ be bounded domains 
	and $\Omega=\Omega_1\times \Omega_2\times \ldots \times \Omega_k\subseteq 
	\mathbb{C}^n,\ n=n_1+n_1+\ldots+n_k$. 
	Then for any $a=(a_1,a_2,\ldots, a_k)\in \Omega,
	\ a_i\in{\Omega_i},\ i=1,2,\ldots, k$, we have $$T_{\Omega}(a)\geq 
	\min_{1\leq i\leq n} T_{\Omega_i}(a_i).$$
	In particular, product of holomorphic homogeneous regular domains is holomorphic
	homogeneous regular.
\end{proposition}
\begin{proof}
	By Theorem \ref{extremal}, for each $a_i\in{\Omega_{n_i}}$, 
	there is a holomorphic embedding $f_i:\Omega_i\to \mathbb{D}^{n_i}$,
	with $f_i(a_i)=0$ such that 
	\begin{equation}\label{eqn:product}
		\mathbb{D}^{n_i}(0,T_{\Omega_{i}}(a_i))\subseteq f_i
		(\Omega_i)\subseteq \mathbb{D}^{n_i},\ i=1,2,\ldots, k.
	\end{equation}
	Consider the map $f:\Omega \to \mathbb{D}^n$ defined as
	$$f(z_1,z_2,\ldots, z_k)=\left(f_1(z_1),f_2(z_2),\ldots,f_k(z_k)\right).$$
	Clearly, $f$ is holomorphic embedding with $f(a)=0$. Now let us take 
	$w=(w_1,w_2,\ldots, w_k)
	\in \mathbb{D}^{n_1}(0,r)\times \mathbb{D}^{n_2}(0,r)\times \ldots
	\times \mathbb{D}^{n_k}(0,r), $ where $r$ is given by $\displaystyle{r= \min_{1\leq i\leq n} T_{\Omega_i}(a_i)}$. By
	Equation \ref{eqn:product}, there exists $b_i\in{\Omega_i},$ such that $f_i(b_i)=w_i,\ i=1,2,
	\ldots, k,$ since
	$\mathbb{D}^{n_i}(0,r)\subseteq \mathbb{D}^{n_i}(0,T_{\Omega_i}(a_i)) $ for each $i$.
	Thus $w=f(b_1,b_2,\ldots, b_k)$ and as $w$ was arbitrarily chosen, we conclude 
	$\mathbb{D}^{n}(0,r)\subseteq f(\Omega)$. 
	Thus it follows from the definition that $T_{\Omega}(a)\geq 
	\displaystyle{\min_{1\leq i\leq n} T_{\Omega_i}(a_i)}.$
\end{proof}
\begin{remark}
	The above  succinct proof suggested by Gautam Bharali is an obvious replacement  of the proof given by us in earlier version  {\underline{arXiv:2007.14363v1}} of the article.
\end{remark}
\begin{remark}
	The conclusion that the product of holomorphic homogeneous regular domains is
	holomorphic homogeneous regular follows directly from Result \ref{product domain s} 
	and Lemma \ref{relatinglemma} too. We have given this proof to point that
	the lower bound  obtained in 
	Proposition \ref{product domain t} for squeezing function is an improvement over 
	the lower bound obtained in Result \ref{product domain s} for squeezing
	fuction corresponding to unit ball. This is ensured by the following simple
	observation.
\end{remark}

\begin{lemma}
	Let $0<\alpha_i\leq 1,\ i=1,2,\ldots, k$, then for each $i$
	\begin{equation*}\label{eqn:compare}
		\alpha_i\geq \left(\alpha_1^{-2}+\alpha_2^{-2}+\ldots+\alpha_k^{-2}\right)^{-\frac{1}{2}}.
	\end{equation*}
\end{lemma}
Now our assertion about improvement of lower bound follows by taking 
$\alpha_i=T_{\Omega_i}(a_i),\ i=1,2,\ldots, k.$

\section{Stability of squeezing function}
In \cite{2016}, authors discussed the relation between limit of squeezing function (corresponding 
to unit ball) of a sequence of domains and the squeezing function (corresponding 
to unit ball) of the limit domain. We investigate this relation for squeezing function in this section. 

\begin{theorem}\label{increasing-stab}
	Let $\Omega\subseteq \mathbb{C}^n$ be a bounded domain and let $\Omega_k\subseteq \Omega,
	\ k\in{\mathbb{N}}$ be such that $\Omega_k\subseteq \Omega_{k+1}$ for all $k$ and
	$\Omega=\cup_k \Omega_k$. Then 
	for any $z\in{\Omega}$, $$\lim_{k\to\infty} T_{\Omega_k}(z)=T_{\Omega}(z).$$
\end{theorem}
\begin{proof}
	Let $z\in{\Omega}$, then $z\in{\Omega_{k_0}}$ for some $k_0$ and therefore $z\in 
	\Omega_k$ for $k>k_0.$ For each $k>k_0$, 
	let $f_k:\Omega_k \to \mathbb{D}^n$ be a holomorphic embedding with $f_k(z)=0$ 
	such that $\mathbb{D}^n(0,T_{\Omega_k}(z))\subseteq f_k(D)$ (Theorem \ref{extremal}).
	Montel's theorem ensures that $\{f_k\}$ has a subsequence which converges to a function 
	$f:\Omega\to \mathbb{C}^n$
	on compact subsets of $\Omega$. Since each $f_k:\Omega_k \to \mathbb{D}^n$, it is
	obvious that $f:\Omega \to \overline{\mathbb{D}^n}$. We claim that 
	$f:\Omega \to \mathbb{D}^n$, for this we will prove that $f$ is injective.
	
	For $k>k_0$, since $S_{\Omega_k}(z)\geq \dfrac{d(z,\partial \Omega_k)}
	{diam \Omega_k}\geq \dfrac{d(z,\partial \Omega_{k_0})}{diam \Omega}$,
	therefore using Lemma \ref{relatinglemma} we get $$T_{\Omega_k}(z)\geq \frac{1}
	{\sqrt{n}}S_{\Omega_k}(z)\geq 
	\frac{1}{\sqrt{n}} \dfrac{d(z,\partial \Omega_k)} {diam \Omega_k}\geq
	\frac{1}{\sqrt{n}} \dfrac{d(z,\partial \Omega_{k_0})}{diam \Omega}.$$
	Thus there exists $\delta>0$ such that $\mathbb{D}^n(0,\delta)\subseteq f_k(\Omega_k)$
	for all $k>k_0$. Therefore for each $k>k_0,$ $g_k=f_k^{-1}|_{\mathbb{D}^n(0,\delta)}:
	\mathbb{D}^n(0,\delta)\to \Omega$ is well defined. 
	For $k>k_0$, considering Cauchy's inequality for $g_k$, we get that 
	$\left|det(dg_k(0))\right|< c,$ for some $c>0$. Now since $f_k\circ g_k=
	\mathbbm{Id}_{\mathbb{D}^n(0,\delta)}$, we get that $\left|det(df_k(z))\right|>\frac 1 c$ 
	and therefore $\left|det(df(z))\right|\geq \frac 1 c>0$. 
	Therefore $f$ is locally one-one at $z$ and is thus open using 
	\cite[Lemma~2.3]{2012} and \cite[Theorem~3]{Rouche}. Thus $f:\Omega\to \mathbb{D}^n.$
	
	In order to establish the theorem, we first prove $T_{\Omega}(z)\geq \limsup_k 
	T_{\Omega_k}(z)$. Let $\limsup_k T_{\Omega_k}(z)=r$ and let $\limsup_k 
	T_{\Omega_{k_i}}(z)$ be a subsequence such that 
	$$\lim_{k\to\infty}\limsup_k T_{\Omega_{k_i}}(z)=r.$$ It is easy to observe that $r>0$.
	Let $0<\epsilon<r,$ considering $r-\epsilon>0$ we get that $\mathbb{D}^n(0,r-\epsilon)
	\subseteq f_{k_i}(\Omega_{k_i})$ for large $k_i$. Let $h_{k_i}=
	f_{k_i}^{-1}|_{\mathbb{D}^n(0,r-\epsilon)}:\mathbb{D}^n(0,r-\epsilon)\to \Omega_{k_i}
	\subseteq \Omega.$ By Montel's theorem, we may assume(by relabeling the indices) that $h_{k_i}$ 
	converges to $h:\mathbb{D}^n(0,r-\epsilon)\to \mathbb{C}^n$ uniformly on compact subsets of
	$\mathbb{D}^n(0,r-\epsilon).$ Since each $h_{k_i}:\mathbb{D}^n(0,r-\epsilon)\to \Omega$, 
	therefore $h:\mathbb{D}^n(0,r-\epsilon)\to \overline{\Omega}.$ Now 
	$$\left|det(dh(0))\right|=\lim_i \left|det(dh_{k_i}(0))\right|=\lim_i 
	\left|det(df_{k_i}^{-1}(0))\right|=\left|det(df^{-1}(0))\right| \neq 0.$$
	This implies that $h$ is injective and hence open.  So it follows that $h:\mathbb{D}^n
	(0,r-\epsilon)\to \Omega$. Thus $f\circ h$ is well defined and it is obvious that 
	$f\circ h=\mathbbm{Id}_{\mathbb{D}^n(0,r-\epsilon)}.$
	This gives us $\mathbb{D}^n(0,r-\epsilon)\subseteq f(\Omega)$, which further implies that
	$T_{\Omega}(z)\geq r-\epsilon$. Therefore we get $T_{\Omega}(z)\geq \limsup_k 
	T_{\Omega_k}(z).$ 
	
	Now we prove that $T_{\Omega}(z)\leq \liminf_k T_{\Omega_k}(z)$. Consider subsequence
	$T_{\Omega_{k_i'}}(z)$ converging to $\liminf_k T_{\Omega_k}(z).$ Using Theorem \ref{extremal}
	there exists holomorphic embedding $g:\Omega \to \mathbb{D}^n$ with $g(z)=0$ such that
	$\mathbb{D}^n(0,T_{\Omega}(z))\subseteq g(\Omega).$ Then for arbitrary $\epsilon>0$ with
	$0<\epsilon <T_{\Omega}(z)$, we have $g^{-1}\left(\mathbb{D}^n(0,T_{\Omega}(z)-\epsilon)\right)
	\subseteq \Omega_{k_i'}$ for large $k_i'.$ This implies that 
	$$\mathbb{D}^n(0,T_{\Omega}(z)-\epsilon)\subseteq g(\Omega_{k_i'})$$
	for large $k_i'$. Therefore we get $T_{\Omega}(z)-\epsilon \leq  T_{\Omega_{k_i'}}(z)$. 
	This further gives us that $$T_{\Omega}(z)-\epsilon \leq \lim_i T_{\Omega_{k_i'}}(z)=
	\liminf_k T_{\Omega_k}(z).$$
	Hence the result follows since $\epsilon$ is arbitrary.
\end{proof}
As pointed out in \cite[Lemma~2.2]{deng2019}, the proof of the following corollary follows from
the proof of Theorem \ref{increasing-stab}.
\begin{corollary}\label{lem:increasing-stab}
	Let $\Omega,\ \Omega_k\subseteq \mathbb{C}^n(k\geq 1)$ be bounded domains, where
	$\Omega_k\subseteq \Omega$ with the condition that for each compact subset $K\subseteq
	\Omega$ there exists $N\in{\mathbb{N}}$ such that $K\subseteq \Omega_k$ for all $k\geq N$.
	Then for any $z\in{\Omega}$, $$\lim_{k\to\infty} T_{\Omega_k}(z)=T_{\Omega}(z).$$
\end{corollary}
\begin{proof}
	Let $z\in{\Omega},$ then using hypothesis for $K=\{z\},$ there exists $N\in{\mathbb{N}}$
	such that $z\in{\Omega_k}$ for each $k>N.$ Now the proof follows proceeding as in
	Theorem \ref{increasing-stab}.
\end{proof}
Similar to the case of squeezing function corresponding to unit ball, we have the following theorem for 
a decreasing sequence of domains. We omit the details of the proof since it 
follows by the same arguments as in 
\cite[Theorem~2.2]{2016}, modifying the arguments for polydisk as we did in the proof of Theorem
\ref{increasing-stab}.
\begin{theorem}\label{decreasing-stab}
	Let $\Omega\subseteq \mathbb{C}^n$ be a bounded domain and let $\Omega\subseteq \Omega_k,
	\ k\in{\mathbb{N}}$ be such that $\Omega_k\supseteq \Omega_{k+1}$ for all $k$ and
	$\Omega=\cap_k \Omega_k$. Then 
	for any $z\in{\Omega}$, $$\limsup_k T_{\Omega_k}(z)\leq T_{\Omega}(z).$$
\end{theorem}
Using Lemma \ref{relatinglemma} and \cite[Example~2.1]{2016}, we get that strict inequality 
may occur in Theorem \ref{decreasing-stab}.

In \cite[Proposition~1]{kaushal}, authors gave relation between squeezing function
corresponding to unit ball and the corresponding Fridman invariant. We prove 
the same relation for squeezing function and the corresponding Fridman invariant
in the following proposition. 
\begin{proposition}\label{fridrelating}
	Let $\Omega\subseteq \mathbb{C}^n$ be a bounded domain, then for any $a\in{\Omega}$, 
	$$T_{\Omega}(a)\leq h_{\Omega}^c(a).$$
\end{proposition}
\begin{proof}
	For $a\in{\Omega}$,
	let $f:\Omega\to \mathbb{D}^n$ be a holomorphic embedding such that $f(a)=0.$ Let $r>0$ 
	be such that $\mathbb{D}^n(0,r)\subseteq f(\Omega).$ Consider $g:\mathbb{D}^n\to\Omega$
	defined as $g(z)=f^{-1}(rz).$ We claim that $B^c_{\Omega}\left(a,\tanh^{-1}r\right)
	\subseteq g(\mathbb{D}^n) \subseteq \Omega.$ Let $w\in{B^c_{\Omega}\left(a,\tanh^{-1}r\right)},$
	then 
	\begin{align*}
		\tanh^{-1}r&>c_{\Omega}(a,w)\\
		&=c_{f(\Omega)}(f(a),f(w))\\
		&=c_{f(\Omega)}(0,f(w))\\
		&\geq c_{\mathbb{D}^n}(0,f(w))\\
		&=\max_{1\leq i\leq n}\rho(0,f_i(w)),
	\end{align*} where $\rho$ denotes Poincar\' e metric on unit disk in $\mathbb{C}.$
	This implies that $|f_i(w)|<r$ for all $i=1,2,\ldots, n$, that is,
	$f(w)\in{\mathbb{D}^n(0,r)}.$ Thus we get $w\in{f^{-1}(\mathbb{D}^n(0,r))}=g({\mathbb{D}^n})$
	and this further implies our claim. Therefore $r\leq h_{\Omega}^c(a)$ and hence we 
	get $T_{\Omega}(a)\leq h_{\Omega}^c(a)$.
\end{proof}

\begin{remark}\label{rem-fridrelating}
	Let $\Omega= \mathbb{D}^n$, then $T_{\Omega}\equiv 1$. Also  $h_{\Omega}^c\equiv 1$
	(see \cite{frid1979} for proof)
	therefore for unit polydisk we have $T_{\Omega}(a)=h_{\Omega}^c(a)$ for every $a\in{\Omega}$.
	Thus using biholomorphic invariance of $T_\Omega$ and $h_{\Omega}^c$, we get that $T_{\Omega}(a)
	= h_{\Omega}^c(a)$, whenever $\Omega$ is biholomorphic to unit polydisk. 
	But equality can also be obtained for domains,
	which are not biholomprhic to unit polydisk. For example take $\Omega=B^n$, then
	$T_{\Omega}\equiv h_{\Omega}^c$. 
	
\end{remark}

\begin{remark}
	For $\Omega_1=\mathbb{B}^n$ and $\Omega_2=\mathbb{D}^n,$ we have 
	$T_{\Omega_1}\equiv \frac{1}{\sqrt{n}}$,
	$T_{\Omega_2}\equiv 1$, $S_{\Omega_1}\equiv 1$ and $S_{\Omega_2}\equiv \frac{1}{\sqrt{n}}$. 
	Therefore $T_{\Omega_1}= \frac{1}{\sqrt{n}} S_{\Omega_1}$ and 
	$S_{\Omega_2}= \frac{1}{\sqrt{n}} T_{\Omega_2}.$ Also for classical domain $R_1(r,s),$ 
	if we take $r=1,$ then $R_1(1,s)=\mathbb{B}^s$, and therefore
	$T_{R_1(1,s)}\equiv \frac{1}{\sqrt{s}}\equiv \frac{1}{\sqrt{s}}S_{R_1(1,s)}$.  
	In Example \ref{ex: ball}, we noticed that such an equality does not hold
	for the punctured unit ball. We strongly feel that for a bounded homogeneous 
	domain $\Omega$, equality for one of the two inequalities in Lemma \ref{relatinglemma} holds. 
\end{remark}

\medskip
\section*{Acknowledgement}
	We thank Gautam Bharali for his critical and valuable insights for improving the article. 
	We profusely thank the referee for several valuable comments and suggestions.


\begin{thebibliography}{00}
	\bibitem{alexander} H. Alexander,  Extremal holomorphic imbeddings between the ball 
	and polydisc, {\em Proc. Amer. Math. Soc.}, {\bf 68}(2), 200--202 (1978).
	
	
	\bibitem{kobayashi-usual} T. J. Barth, The Kobayashi distance induces the standard topology,
	{\em Proc. Amer. Math. Soc.}, {\bf 35}(2), 439–441 (1972).
	
	\bibitem{2012} F. Deng, Q. Guan, L. Zhang, Some properties of squeezing functions
	on bounded domains,
	{\em Pacific Journal of Mathematics}, {\bf 57}(2), 319--342 (2012).
	
	
	\bibitem{2016} F. Deng, Q. Guan, L. Zhang; Properties of squeezing functions and global transformations of
	bounded domains, {\em Trans. Amer. Math. Soc.},  {\bf 368}, 2679--2696 (2016).
	
	
	\bibitem{deng2019} F. Deng, X. Zhang, Fridman's invariants, squeezing functions and exhausting domains,
	{\em Acta Math. Sin. (Engl. Ser.)}, {\bf 35}, 1723--1728(2019).
	
	\bibitem{talk} J. E. Forn\ae ss, The squeezing function, Talk at Bulgaria academy of science national 
	mathematics colloqium, 2019. 
	
	\bibitem{frid1979} B. L. Fridman, On the imbedding of a strictly
	pseudoconvex domain in a polyhedron, {\em Dokl. Akad.
		Nauk SSSR}, {\bf 249}(1), 63--67 (1979).
	
	\bibitem{frid1983} B.L. Fridman, Biholomorphic invariants of a hyperbolic manifold 
	and some applications, {\em Trans. Amer. Math. Soc.}, {\bf 276}, 685--698 (1983).
	
	
	
	\bibitem{kim-joo} Seungro Joo, Kang-Tae Kim, On boundary points at which the squeezing 
	function tends to one, {\em The Journal of Geometric Analysis}, (2016), 
	10.1007/s12220-017-9910-4. 
	
	\bibitem{uniform-squeezing} K.T. Kim, L. Zhang, On the uniform squeezing property of 
	bounded convex domains in $\mathbb{C}^n$, {\em Pacific Journal of Mathematics},
	{\bf 282}(2), 341--358 (2016).
	
	\bibitem{riemann-removable} Steven G. Krantz., Function theory of several complex variables,
	AMS Chelsea Publishing, Providence, 
	Rhode Island, (1992).
	
	\bibitem{kubota} Y. Kubota, An extremal problem on the classical Cartan domain II,
	{\em Kodai Math. J},  {\bf 5}, 218--224 (1982).
	
	
	
	\bibitem{Yau2004} K. Liu, X. Sun,  S. T. Yau, Canonical metrics on the moduli 
	space of Riemann surfaces, I, {\em J. Differential Geom.}, {\bf 68}(3), 571--637 (2004).
	
	
	\bibitem{Yau2005} K. Liu, X. Sun, and S.T. Yau, Canonical metrics on the moduli space of Riemann
	surfaces, II, {\em J. Differential Geom.}, {\bf 69}(1), 163--216 (2005).
	
	
	\bibitem{Rouche} N.G. Lloyd, Remarks on generalising Rouche’s theorem, {\em J. London Math. Soc. 
		(2)}, {\bf 20}, 259--272 (1979).
	
	\bibitem{kaushalprachi} P. Mahajan, K. Verma, A comparison of two biholomorphic invariants,
	{\em International Journal of Mathematics},
	{\bf 30}(1), 195--212 (2019).
	
	\bibitem{recent}  T.W. Ng, C.C. Tang,  J. Tsai, The squeezing function on doubly-connected domains via 
	the Loewner differential equation, {\em Math. Ann.} (2020). https://doi.org/10.1007/s00208-020-02046-w.
	
	
	\bibitem{andreev} N. Nikolov,  L. Andreev,  Boundary behavior of the squeezing 
	functions of C-convex domains and plane domains, {\em Internat. J. Math.}, {\bf 28}(5)
	(2017), https://doi.org/10.1142/S0129167X17500318, 5 pp.
	
	\bibitem{kaushal} N. Nikolov, K. Verma, On the squeezing function and Fridman invariants, {\em J.
		Geom Anal.}, {\bf 30}, 1218--1225 (2019).
	
	\bibitem{rudin:p} W. Rudin, Function Theory in Polydiscs, Benjamin, New York, 1969 
	Springer-Verlag, New York-Berlin, (1980). 
	
	\bibitem{rudin} W. Rudin, Function theory in the unit ball 
	of $\mathbb{C}^n$, Berlin-Heidelberg-New York: Springer, (1980).
	
	
	\bibitem{yeung} S. K. Yeung, Geometry of domains with the uniform squeezing property,
	{\em  Adv. Math.}, {\bf 221}(2), 547--569 (2009).
	
	
	\bibitem{zimmer2} A. Zimmer, A gap theorem for the complex geometry of convex domains, {\em Trans.
		Amer. Math. Soc.}, {\bf 370}, 7489--7509 (2018).
\end{thebibliography}
\end{document}